\DeclareMathOperator*{\argmin}{argmin}
\numberwithin{equation}{section}
\definecolor{gnuplotbrown}{RGB}{165,42,42}
\definecolor{gnuplotblue}{RGB}{0,0,255}
\definecolor{gnuplotred}{RGB}{255,0,0}
\newcommand{\R}{{\mathds{R}}}
\newcommand{\differential}{{\,\mathrm{d}}}
\newcommand{\D}{{\mathrm{D}}}
\renewcommand{\div}{\operatorname{div}}
\newcommand{\inputImage}{u_{0}} \newcommand{\image}{u} \newcommand{\IB}{I_{\overline{B}_1}}
\newcommand{\Id}{\mathrm{Id}}
\newcommand{\err}{\mathrm{err}}
\newcommand{\jumpArea}{\mathrm{a}}
\newcommand{\Hausdorff}{\mathcal{H}}
\newcommand{\Lebesgue}{\mathcal{L}}
\newcommand{\EnerMS}{E}
\newcommand{\ERel}{E^\text{rel}}
\newcommand{\DRel}{{D^\text{rel}}}
\newcommand{\EnerS}{{\ERel_s}}
\newcommand{\discrete}[1]{\mathbf{#1}_h}
\newcommand{\quadMesh}{\mathcal{M}_h}
\newcommand{\simplexMesh}{\mathcal{S}_h}
\newcommand{\V}{\mathcal{V}}
\newcommand{\Q}{\mathcal{Q}}
\newcommand{\ff}{{\theta}}
\newcommand{\Ph}{\discrete{P}}
\newcommand{\Qh}{\discrete{Q}}
\newcommand{\Uh}{\discrete{U}}
\newcommand{\barUh}{\bar{\mathbf{U}}_h}
\newcommand{\Vh}{\discrete{V}}
\newcommand{\Wh}{\discrete{W}}
\newcommand{\proj}{\mathcal{P}_h}
\newcommand{\ffhone}{{\Theta_{1,h}}}
\newcommand{\ffhtwo}{{\Theta_{2,h}}}
\newcommand{\ffhonei}{{\Theta^i_{1,h}}}
\newcommand{\ffhtwoi}{{\Theta^i_{2,h}}}
\newcommand{\Mass}{{\discrete{M}}}
\newcommand{\Stiff}{{\discrete{S}}}
\newcommand{\tildeMass}{\mathbf{\tilde M}_h}
\newcommand{\Fh}{{\mathbf{F}_h}}
\newcommand{\Fhstar}{{\mathbf{F}^\ast_h}}
\newcommand{\Gh}{{\mathbf{G}_h}}
\newcommand{\Ghstar}{{\mathbf{G}^\ast_h}}
\newcommand{\Lh}{{\Lambda_h}}
\newcommand{\Lhstar}{{\Lambda^\ast_h}}
\newcommand{\ERelh}{{\mathbf{E}_h^{rel}}}
\newcommand{\DRelh}{{\mathbf{D}_h^{rel}}}
\newcommand{\neigh}{{\mathscr{N}}}
\newcommand{\beq}{\begin{equation*}}
\newcommand{\eeq}{\end{equation*}}
\newcommand{\beqn}{\begin{equation}}
\newcommand{\eeqn}{\end{equation}}
\newcommand{\beqa}{\begin{align*}}
\newcommand{\eeqa}{\end{align*}}
\newcommand{\beqan}{\begin{align}}
\newcommand{\eeqan}{\end{align}}
\theoremstyle{plain}\newtheorem{theorem}{Theorem}[section]
\newtheorem{proposition}[theorem]{Proposition}
\theoremstyle{definition}
\theoremstyle{remark}
\def\XXint#1#2#3{{\setbox0=\hbox{$#1{#2#3}{\int}$}
\vcenter{\hbox{$#2#3$}}\kern-.5\wd0}}
\DeclareRobustCommand\onedot{\futurelet\@let@token\@onedot}
\def\@onedot{\ifx\@let@token.\else.\null\fi\xspace}
\def\eg{\emph{e.g}\onedot} 
\def\ie{\emph{i.e}\onedot}
\def\wrt{w.r.t\onedot} 
\def\etal{\emph{et al}\onedot}
\definecolor{Red}{rgb}{0.8,0,0}
\definecolor{Blue}{rgb}{0,0,0.8}
\definecolor{Green}{rgb}{0,0.4,0.4}
\definecolor{darkblue} {rgb} {0.00, 0.0, 1.0}
\begin{document}
\renewcommand\Affilfont{\small}
\title{A Posteriori Error Control\\ for the Binary Mumford-Shah Model}
\author[1]{Benjamin Berkels}
\author[2]{Alexander Effland} 
\author[2]{Martin Rumpf}
\affil[1]{AICES Graduate School, RWTH Aachen University, \url{berkels@aices.rwth-aachen.de}}
\affil[2]{Institute for Numerical Simulation, University of Bonn,\authorcr $\{$\url{alexander.effland}, \url{martin.rumpf}$\}$\url{@ins.uni-bonn.de}}
\maketitle
\begin{abstract}
The binary Mumford-Shah model is a widespread tool for image segmentation and can be considered as 
a basic model in shape optimization with a broad range of applications in computer vision, ranging from basic segmentation and labeling
to object reconstruction. This paper presents robust a posteriori error estimates for a natural 
error quantity, namely the area of the non properly segmented region.
To this end, a suitable strictly convex and non-constrained relaxation of the originally non-convex functional is 
investigated and Repin's functional approach for a posteriori error estimation 
is used to control the numerical error for the relaxed problem in the $L^2$-norm. In combination with a suitable 
cut out argument, a fully practical estimate for the area mismatch is derived.
This estimate is incorporated in an adaptive meshing strategy.
Two different adaptive primal-dual finite element schemes, 
and the most frequently used finite difference discretization are investigated and compared.
Numerical experiments show qualitative and quantitative properties of the estimates 
and demonstrate their usefulness in practical applications.
\end{abstract}

\section{Introduction}\label{sec:intro}
Since the introduction of the image denoising and edge segmentation model by Mumford and Shah in the late 80's \cite{MuSh89},  there has 
been much effort to find effective and efficient numerical algorithms to compute minimizers of different variants of this variational problem. 
The original model is based on the functional 
$
E_\text{MS}[\image,K]=\int_{\Omega\backslash K}|\nabla\image|^2+\alpha(\image-\inputImage)^2\differential x+\beta\Hausdorff^{n-1}(K)
$
with $\alpha,\beta>0$, where $u_0:\Omega \to \R$ is a scalar image intensity on the image domain $\Omega\subset\R^n$, $u$ the reconstructed image intensity and $K$ the associated set of edges, on which the image intensity $u$ jumps.   
Here, $\Hausdorff^{n-1}$ denotes the $(n-1)$-dimensional Hausdorff measure. 
The space of functions of bounded variation $BV(\Omega)$ turned out to be the proper space to formulate the problem in a mathematical rigorous way. Indeed, existence in the context of the space of special functions of bounded variation $SBV(\Omega)$ was proved by Ambrosio 
(see \cite[Theorem 4.2]{Am89}). For details on these spaces, we refer to \cite{AmFuPa00}.
Restricting $u$ to be piecewise constant instead of piecewise smooth, one is lead to a basic and widespread image segmentation model. This model is discussed from a geometric perspective in the book by Morel and Solimini \cite{Morel1995}.
In the case of just two intensity values $c_1$ and $c_2$, the associated energy can be rewritten in terms of a characteristic function $\chi \in BV(\Omega,\{0,1\})$ as
\begin{equation}
\EnerMS[\chi,c_1,c_2] =\int_{\Omega}\ff_1\chi+\ff_2(1-\chi)\differential x +|\D\chi|(\Omega)\,.
\label{eq:OriginalBinaryModel}
\end{equation}
Here, $\ff_i = \frac{1}{\nu}(c_i-\inputImage)^2$ for $i=1,2$, the new weight $\nu = \beta/\alpha $ and the resulting binary model is given by $u= c_1 \chi + c_2 (1-\chi)$.
For fixed $\chi$, one immediately obtains the optimal constants $c_1 = (\int_\Omega \chi \differential x)^{-1} \int_\Omega \chi u_0 \differential x$ and $c_2 = (\int_\Omega 1-\chi \differential x)^{-1} \int_\Omega (1-\chi) u_0 \differential x$. For fixed $c_1$ and $c_2$ one aims at minimizing the energy over the non-convex set of characteristic functions $\chi \in BV(\Omega,\{0,1\})$.
Nikolova, Esedoglu and Chan \cite{NiEsCh06} showed that this non-convex minimization problem can be solved via relaxation and thresholding---a breakthrough for both reliable and fast algorithms in computer vision \cite{PoCrBi10, ChCrPo12}. Here, at first one asks for a minimizer of $\EnerMS[\cdot,c_1,c_2]$ over all $u \in BV(\Omega,[0,1])$ and then thresholds $u$ for any threshold value $s\in [0,1)$ to obtain the solution $\chi= \chi_{[u>s]}$ of the original minimization problem. 
The relaxed problem coincides with a constrained version of the classical image denoising model by Rudin, Osher and Fatemi (ROF) \cite{RuOsFa92}.
Numerical schemes for an effective and efficient minimization of this model have extensively been studied.
Making use of a dual formulation, Chambolle \cite{Ch04} introduced an iterative finite difference scheme and proved its convergence.
Hinterm\"uller and Kunisch \cite{HiKu04} proposed a predual formulation for a generalized ROF model and proposed to apply a semismooth Newton method for a regularized variant.
Chambolle and Pock \cite{ChPo11} deduced a primal-dual algorithm with guaranteed first order convergence and applied their approach to different variational models in $BV$ such as  
image denoising, deblurring and interpolation.
The scheme is based on an alternating discrete gradient scheme for the discrete primal and the discrete dual problem. 
Bartels \cite{Ba13} used the embedding $BV(\Omega) \cap L^\infty(\Omega) \hookrightarrow H^{\frac12}(\Omega)$ to improve the step-size restriction
for $BV$ functionals.
Wang and Lucier \cite{WaLu11} employed a finite difference approximation of the ROF model and derived 
an a priori error estimate for the discrete solution based on suitable projection operators.
Following Dobson and Vogel \cite{DoVo97}, the total variation regularization can be approximated smoothly via $\sqrt{|\nabla u|^2 +\epsilon}$.
In \cite{FePr02}, the convergence of the $L^2$-gradient flow of this smooth approximation to the TV flow in $L^2$ is shown
under strong regularity assumptions on the solution.

Furthermore, approximations of the original Mumford-Shah model have been studied extensively.
An early overview of different approximation and discretization strategies was given by Chambolle in \cite{Ch95}.
Ambrosio and Tortorelli \cite{AmTo92} proposed a phase field approximation 
of this functional and proved its $\Gamma$-convergence.
Chambolle and Dal Maso \cite{ChDa99} proposed a discrete finite element approximation and established its $\Gamma$-convergence.
Bourdin and Chambolle \cite{BoCh00} picked up this approach and studied the generation of adaptive  meshes 
iteratively adapted in accordance to an anisotropic metric depending on the current approximate solution.
In \cite{Sh05}, Shen introduced a $\Gamma$-converging approximation of the piecewise constant Mumford-Shah segmentation, where the length
term in the Mumford-Shah model is approximated via an approach originating from the phase field model by Modica and Mortola \cite{MoMo77}.
A simple and widespread level set approach was proposed by Chan and Vese \cite{ChVe01a}.

The goal of this paper is to derive a posteriori error estimates for the binary Mumford-Shah model for fixed constants $c_1$ and $c_2$.
To this end, we proceed as follows:  
We take into account a suitable {\it strictly} convex relaxation of the binary Mumford-Shah functional already studied in \cite{Be10}, which
is related to more general relaxation approaches suggested by Chambolle \cite{Ch05} (see Section \ref{sec:relax}).
For this relaxation, we take into account its predual and set up a corresponding primal-dual algorithm \cite{Ba12, ChPo11,HiKu05} (see Section \ref{sec:primaldual}). 
Then, following Bartels \cite{Ba12}, we use Repin's primal-dual approach \cite{Re00,Re08} to derive functional a posteriori error estimates for the relaxed solution 
based on upper bounds of the duality gap (cf. also the book by Han \cite{Han2005} with respect to mechanical applications) (see Section \ref{sec:APosterioriRelaxed}). 
These estimates can be used together with a suitable cut out argument to derive an a posteriori estimate for the characteristic function $\chi$ minimizing 
the original functional \eqref{eq:OriginalBinaryModel} (see Section \ref{sec:APosterioriChi}).
Moreover, two adaptive finite element discretization schemes and one conventional, non adaptive finite difference scheme are investigated (see Section \ref{sec:FE}).
Finally, we apply the resulting estimate to these schemes incorporating an appropriate post smoothing and present the numerical results (see Sections \ref{sec:algo} and \ref{sec:results}).

\section{A Strictly Convex Relaxation of the Binary Mumford-Shah Model} \label{sec:relax}
Henceforth, we use the notation $\chi_A$ to denote the indicator function of a measurable set $A\subset\Omega$ and define $[u>c]:=\{x\in\Omega:u(x)>c\}$.
Furthermore, we use generic constants $c$ and $C$ throughout this paper.
Rewriting the binary Mumford-Shah functional \eqref{eq:OriginalBinaryModel} as 
\beqn\label{eq:MS}
\EnerMS[\chi,c_1,c_2] =\int_{\Omega}(\ff_1-\ff_2) \chi \differential x +|\D\chi|(\Omega) + \int_{\Omega} \ff_2\differential x\,,
\eeqn
one observes that adding a constant to $\ff_1$ and $\ff_2$ leaves the minimizers $\chi$ unchanged. Thus, we may assume that $\ff_1, \ff_2 \geq c > 0$. 
Now, let us introduce the following relaxed functional 
\beqn
\ERel[\image]=\int_{\Omega}\image^2 \ff_1+(1-\image)^2 \ff_2\differential x+|\D\image|(\Omega)\,,
\label{eq:relaxedMS}
\eeqn
which is supposed to be minimized over all  $\image \in BV(\Omega,\R)$. 
Indeed, $\ERel[\chi] = \EnerMS[\chi,c_1,c_2]$
for characteristic functions $\chi$ and one retrieves the original binary Mumford-Shah model.
Proving existence of minimizers of \eqref{eq:relaxedMS} with the direct method in the calculus of variations is straightforward. 
For details we refer to \cite{AmFuPa00, EvGa92}.
Furthermore, \eqref{eq:relaxedMS} is strictly convex by our above assumptions on $\ff_1$ and $\ff_2$.
Loosely speaking,  a preference for the values $0$ and $1$ for $\image$ is encoded in the quadratic growing data term.
The minimizers of both functionals \eqref{eq:OriginalBinaryModel} and \eqref{eq:relaxedMS} are related in the following sense (cf. \cite{Be10}):
\begin{proposition}[Convex relaxation and thresholding]\label{ConvexRelaxationBinaryMS}
Under the above assumptions, a minimizer $\image\in BV(\Omega)$ of the functional $\ERel$ exists, $\image(x)\in[0,1]$ for a.e. $x\in\Omega$ and
\beq
\chi_{[\image>0.5]}\in\argmin_{\chi\in BV(\Omega,\{0,1\})}\EnerMS[\chi]\,.
\eeq
\end{proposition}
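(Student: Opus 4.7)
The plan is to establish, in sequence, existence of a minimizer of $\ERel$, the $[0,1]$ bound, and the thresholding identity. For existence, since $\theta_1,\theta_2 \geq c>0$ the quadratic data term is coercive in $L^2(\Omega)$ while the total variation contributes a bound on $|Du|(\Omega)$, so any minimizing sequence is bounded in $BV(\Omega)$. A standard weak-$\ast$ compactness argument combined with lower semicontinuity of the total variation and continuity/convexity of the data term yields a minimizer, and strict convexity of $\ERel$ makes it unique. For the $[0,1]$ bound, I would truncate $u$ to $\bar u := \max(0,\min(u,1))$. Truncation does not increase the total variation, and pointwise $\bar u^2 \theta_1 + (1-\bar u)^2 \theta_2 < u^2 \theta_1 + (1-u)^2 \theta_2$ wherever $u(x) \notin [0,1]$. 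Hence $\ERel[\bar u] \leq \ERel[u]$ with strict inequality unless $u\in[0,1]$ a.e., and uniqueness forces $u\in[0,1]$ a.e.

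The thresholding step is the core of the proof. For $u\in BV(\Omega,[0,1])$ one has the pointwise layer-cake identities $u(x)^2 = \int_0^1 2s\,\chi_{[u>s]}(x)\,\mathrm{d}s$ and $(1-u(x))^2 = \int_0^1 2(1-s)\,(1-\chi_{[u>s]}(x))\,\mathrm{d}s$, while the coarea formula gives $|Du|(\Omega) = \int_0^1 |D\chi_{[u>s]}|(\Omega)\,\mathrm{d}s$. Substituting these into $\ERel$ yields
\beq
\ERel[u] = \int_\Omega \theta_2\,\mathrm{d}x + \int_0^1 H_s[\chi_{[u>s]}]\,\mathrm{d}s, \quad H_s[\chi] := \int_\Omega [2s\theta_1 - 2(1-s)\theta_2]\chi\,\mathrm{d}x + |D\chi|(\Omega).
\eeq
A direct computation on $BV(\Omega,\{0,1\})$ shows $H_{1/2}[\chi] = \EnerMS[\chi] - \int_\Omega \theta_2\,\mathrm{d}x$, so minimizing $\EnerMS$ on characteristic functions coincides with minimizing $H_{1/2}$ there.

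A Chan-Esedoglu-Nikolova-style argument then shows that for almost every $s \in (0,1)$, $\chi_{[u>s]}$ minimizes $H_s$ over $BV(\Omega,\{0,1\})$: if not, I could use the monotonicity of $s \mapsto \chi_{[u>s]}$ to assemble a $BV(\Omega,[0,1])$-competitor $\tilde u$ from strictly better, monotonically nested level-wise candidates, contradicting optimality of $u$. To promote the conclusion from almost every $s$ to the specific value $s=1/2$, I take $s_n\searrow 1/2$ along the full-measure set where the claim holds; monotone convergence gives $\chi_{[u>s_n]} \to \chi_{[u>1/2]}$ in $L^1(\Omega)$, lower semicontinuity of the total variation applies, and $s\mapsto H_s[\tilde\chi]$ is continuous for each fixed $\tilde\chi \in BV(\Omega,\{0,1\})$. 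Passing to the limit transfers optimality to $\chi_{[u>1/2]}$ for $H_{1/2}$, and the identification from the previous paragraph then yields the claim.

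The main obstacle I anticipate is the level-wise reassembly step: given a monotone family of strictly improved candidates $\tilde\chi_s$, constructing a single $\tilde u \in BV(\Omega,[0,1])$ whose super-level sets realize them on a set of positive measure in $s$ requires a careful monotone selection among the minimizers of $H_s$ and a quantitative integration argument in $s$ to convert a level-wise strict decrease into a strict decrease of $\ERel$. The final upgrade from almost every $s$ to $s=1/2$ is, by comparison, a routine lower-semicontinuity plus continuity argument.
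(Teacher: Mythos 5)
Your overall architecture coincides with the paper's: decompose $\ERel[u]$ via layer-cake and coarea into $\int_\Omega\ff_2\differential x+\int_0^1 H_s[\chi_{[u>s]}]\differential s$, identify $H_{1/2}$ with $\EnerMS-\int_\Omega\ff_2\differential x$ on characteristic functions, prove level-wise optimality for a.e.\ $s$, and upgrade to $s=\tfrac12$ by taking $s_n\searrow\tfrac12$ in the full-measure set and using $L^1$-convergence of $\chi_{[u>s_n]}$ to $\chi_{[u>1/2]}$ plus lower semicontinuity of the total variation. The truncation argument, the layer-cake identities, and the final limit step are all correct as written.

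However, the step you flag as an anticipated obstacle --- the ``level-wise reassembly'' --- is not a technicality to be filled in later; it is the core of the proof, and as it stands your proposal does not contain it. The difficulty is exactly the one you sense: minimizers of $H_s$ over $BV(\Omega,\{0,1\})$ are in general not unique, so an arbitrary selection $s\mapsto\tilde\chi_s$ of (strictly better) competitors need not be nested, and then no function $\tilde u$ has $\chi_{[\tilde u>s]}=\tilde\chi_s$ for a.e.\ $s$, so the decomposition of $\ERel[\tilde u]$ into $\int_0^1 H_s[\tilde\chi_s]\differential s$ breaks down. The paper closes this with a specific tool you would need to import or prove: the comparison lemma of Alter, Caselles and Chambolle \cite[Lemma 4]{AlCaCh05}, which states that if $h_1\leq h_2$ a.e.\ and $\chi_i$ minimizes $\int_\Omega h_i\chi\differential x+|\D\chi|(\Omega)$, then $\chi_1\geq\chi_2$ a.e. Since $s\mapsto 2s\ff_1-2(1-s)\ff_2=2s(\ff_1+\ff_2)-2\ff_2$ is strictly increasing (here the assumption $\ff_1,\ff_2\geq c>0$ enters), any choice of minimizers $\chi^s$ of $H_s$ is automatically monotone in $s$, so $u^*(x):=\sup\{s:\chi^s(x)=1\}$ is a well-defined measurable function with $\chi^s=\chi_{[u^*>s]}$, lies in $BV(\Omega,[0,1])$ by coarea, and satisfies $\ERel[u^*]=\int_\Omega\ff_2\differential x+\int_0^1 H_s[\chi^s]\differential s$. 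The sandwich $\ERel[u]\geq\int_\Omega\ff_2\differential x+\int_0^1 H_s[\chi^s]\differential s=\ERel[u^*]\geq\ERel[u]$ then forces $H_s[\chi_{[u>s]}]=H_s[\chi^s]$ for a.e.\ $s$ directly --- no contradiction argument, no ``quantitative integration'' of a strict decrease, and no measurable-selection issue, since one only needs the existence of \emph{some} minimizer $\chi^s$ for each $s$ (direct method) together with the automatic nesting. Without this lemma or an equivalent monotone-selection statement, your proof is incomplete at precisely the decisive point.
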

\begin{proof}
Proposition \ref{ConvexRelaxationBinaryMS} is an instance of a more general result, which can be found in \cite{Ch05,ChDa09}.
In fact, let  $\Psi:\Omega\times\R\rightarrow\R$ be measurable, $\Psi(\cdot,t)\in L^1(\Omega)$ for a.e. $t\in\R$,
$\Psi(x,\cdot)\in C^1(\R)$ be strictly convex for a.e. $x\in\Omega$, $\Psi(x,t)\geq c|t|-C$ and
$E_\Psi[\image]=\int_{\Omega}\Psi(x,\image)\differential x+|\D\image|(\Omega)\,$.
Then, there exists a minimizer
$\image\in\argmin_{\tilde\image\in BV(\Omega)}E_\Psi[\tilde\image]$
and for $s\in\R$
\beq
\chi^s:=\chi_{[\image>s]}\in\argmin_{\chi\in BV(\Omega,\{0,1\})}\int_{\Omega}\partial_t\Psi(x,s)\chi\differential x +|\D\chi|(\Omega)\,.
\eeq
With $\Psi(x,t)=t^2\ff_1(x)+(1-t)^2\ff_2(x)$ and $s=\frac12$ this allows to verify the main claim of Proposition~\ref{ConvexRelaxationBinaryMS}. 
Indeed, for $t\in\R$ and $\chi\in BV(\Omega,\{0,1\})$, let $\ERel_t[\chi]:=\int_{\Omega}\partial_t\Psi(x,t)\chi\differential x+|\D\chi|(\Omega)$. 
For our specific choice of $\Psi$, $\ERel_t[\chi]=\int_{\Omega} \left(2t(\ff_1(x)+\ff_2(x)) - 2\ff_2(x)\right)\chi  \differential x + |\D\chi|(\Omega)\,$ implies that
minimizing the functional $\ERel_\frac12$ is equivalent to minimizing the functional $\EnerMS[\cdot,c_1,c_2]$ because
$\ERel_\frac12[\chi] = \EnerMS[\chi,c_1,c_2] -  \int_{\Omega} \ff_2\differential x$.

For the sake of completeness, we give here the proof of the more general statement tailored to our particular instance.
Obviously, $\ERel[\min\{\max\{0,v\},1\}]\leq \ERel[v]$ for any $v\in BV(\Omega)$. 
Hence, the minimizer takes values in the range $[0,1]$.
Due to Fubini's Theorem, we obtain
\[
\int_{\Omega}\Psi(x,\image)\differential x= \int_{\Omega}\Psi(x,0)\differential x +\int_0^1\int_\Omega\partial_t\Psi(x,t)\chi_{[u>t]}\differential x\differential t
\]
for $u\in BV(\Omega,[0,1])$.
For $s\in\R$, let $\chi^s$ denote a minimizer of $\ERel_s$ in the set $BV(\Omega,\{0,1\})$. 
Using the coarea formula (see \cite[Theorem 3.40]{AmFuPa00}) and the minimization property of $\chi^s$ we get
$\ERel[\image] \geq C_\Psi+\int_0^1\EnerS\left[\chi^s\right]\differential s\,,$ 
where $C_\Psi = \int_{\Omega}\Psi(x,0)\differential x$. 

Next, we make use of the following monotonicity result  \cite[Lemma 4]{AlCaCh05}:
Let $h_1,h_2\in L^1(\Omega)$ such that $h_1\leq h_2$ a.e. in $\Omega$ and assume
$\chi_i$ is a minimizer of $\int_{\Omega}h_i\chi\differential x+|\D\chi|(\Omega)$ in  $BV(\Omega,\{0,1\})$
for $i\in\{1,2\}$, then $\chi_1\geq\chi_2$ a.e. in $\Omega$.
Due to the strict positivity of $\ff_1$ and $\ff_2$ we obtain that $\partial_t\Psi(x,s)$ is strictly increasing in $s$ for fixed $x$. Thus, choosing $h_i = \partial_t\Psi(x,s_i)$
we get that $\chi^{s_1} \geq \chi^{s_2}$ a.e. in $\Omega$ for $s_1 \leq s_2$.
This implies that $u^\ast(x) = \sup \{ s \,|\, \chi^s(x) = 1\}$ is well-defined and Lebesgue measurable with 
$\chi^s = \chi_{[u^\ast > s]}$. Furthermore, by the coarea formula we get $\image^\ast \in BV(\Omega)$ and $C_\Psi+\int_0^1 \EnerS[\chi^s]\differential s=\ERel[\image^*]\,$.
Altogether, we obtain the chain of estimates
\beq
\ERel[\image] 
\geq C_\Psi+\int_0^1\EnerS\left[\chi^s\right]\differential s=\ERel[\image^*]\geq\ERel[\image]\,,
\eeq
in which the inequalities are actually equalities.
Thus, for a.e.\ $s\in [0,1]$ the characteristic function $\chi_{[u^\ast>s]}$ is a minimizer of $\EnerS$.
We are left to show that this holds in particular for $s=\tfrac12$.
To this end, we consider a monotonously decreasing sequence $s_n$, which converges to $\tfrac12$.
For any $\tilde\chi\in BV(\Omega,\{0,1\})$ we can infer by the dominated convergence theorem
and the weak-$*$ lower semicontinuity of the total variation (noting that $\chi_{[\image>s_n]}$ converges weak-$*$ in $BV$ to $\chi_{[\image>\frac12]}$) 
\beq
\ERel_{\frac12}[\tilde\chi]=\liminf_{n\rightarrow\infty}\ERel_{s_n}[\tilde\chi]\geq\liminf_{n\rightarrow\infty}\ERel_{s_n}[\chi_{[\image>s_n]}]\geq\EnerS[\chi_{[\image>\frac12]}]\,,
\eeq
from which the assertion follows (indeed the more general result holds by the same argument for all $s\in [0,1)$).
\end{proof}

\section{A Primal Dual Approach for the Relaxed Problem}\label{sec:primaldual}
In this section, we make use of convex analysis to derive a duality formulation for the 
minimization problem of the relaxed functional \eqref{eq:relaxedMS}. Primal and dual formulation will later be used in the a posteriori estimates. The dual of $BV(\Omega)$ is very difficult to characterize and not suitable for computational purposes. Thus, for a generalized ROF model, Hinterm\"uller and Kunisch \cite{HiKu05} proposed to consider the corresponding $BV$ functional as the dual of another functional, which we refer to as the \textit{predual functional}. Bartels \cite{Ba12} made use of this approach in the context of a posteriori estimates for the ROF model. Here, we follow this procedure and investigate the predual of \eqref{eq:relaxedMS}.

Recall that the \textit{Fenchel conjugate} $J^{*}$ of a functional $J: X \to \bar \R$ on a Banach space 
$X$ with $\bar \R = \R \cup \{\infty\}$ is a functional on the dual space $X^\prime$ with values in $\bar \R$, defined as $J^{*}[x'] = \sup_{x\in X}\{\langle x',x\rangle-J[x]\}\,$,
where $\langle\cdot,\cdot\rangle$ denotes the duality pairing.
Furthermore, we denote by $\Lambda^{*}\in\mathcal{L}(Y',X')$ the \textit{adjoint operator} of $\Lambda\in\mathcal{L}(X,Y)$ and by $\partial J$ the \textit{subgradient} of $J$  
(cf.  \cite{EkTe99}).

Now, we investigate an energy functional 
\beqn
\DRel[q] = F[q] + G[\Lambda q]\qquad q\in \Q\,, 
\label{eq:prototypeFunctional}
\eeqn
where $F:\Q\rightarrow\bar\R$ and $G:\V\rightarrow\bar\R$ being proper, convex and lower semicontinuous functionals,  $\V$ and $\Q$ being reflexive Banach spaces
and $\Lambda\in\mathcal{L}(\Q,\V)$.
In our case of the predual of the convex relaxed binary Mumford-Shah model, we have
\beq
 F[q]=\IB[q]=\left\{\begin{array}{c l} 0&\text{if }|q|\leq 1\text{ a.e.}\\
                    +\infty&\text{else}
                   \end{array}
\right.\,,
\quad
G[v]=\int_{\Omega}\frac{\frac{1}{4}v^{2}+ v\ff_2-\ff_1\ff_2}{\ff_1+\ff_2}\differential x\,,
\eeq
with $\Lambda=\div$, $\Q=H_N(\div,\Omega)$ and $\V=L^2(\Omega)$. Recall the definition of the spaces $H(\div,\Omega)=\{q\in L^2(\Omega,\mathbb{R}^n):\ \div q \in L^2(\Omega)\}$,
endowed with the norm $\|q\|_{H(\div,\Omega)}^2=\|q\|_{L^2(\Omega)}^2+\|\div q\|_{L^2(\Omega)}^2$,
and $H_N(\div,\Omega)=H(\div,\Omega)\cap\{q\cdot \nu =0\text{ on }\partial\Omega\}$, where $\nu$ is the outer normal on $\partial \Omega$ and the operator
$\div$ is understood in the weak sense.  Moreover, $\Lambda^{*}=-\nabla$ holds in the sense
\beq
 (\Lambda^{*} v,q)_{L^2(\Omega)}=(v,\div q)_{L^2(\Omega)}\quad \forall v\in\V,\ q\in\Q\,.
\eeq
Based on this duality and for the particular choice of $\DRel$, we easily verify that $(\DRel)^\ast=\ERel$. 
Indeed, from the general theory in \cite[pp. 58 ff.]{EkTe99}, we can deduce $(\DRel)^\ast[v]=F^{*}[-\Lambda^{*}v]+G^{*}[v]$.
As a result of the denseness of $C^1_c(\Omega)$ in $H_N(\div,\Omega)$ with respect to the norm $\|\cdot\|_{H(\div,\Omega)}$, we can infer for any $v\in BV(\Omega)$
\beq
|\D v|(\Omega)=\sup_{q\in\Q,\|q\|_{\infty}\leq 1}\int_{\Omega}v\div q\differential x
=\sup_{q\in\Q}\left(-\int_{\Omega}v\div q\differential x-\IB[q]\right)\,,
\eeq
which leads to
\beq
F^{*}[-\Lambda^{*}v]=\sup_{q\in\Q}\left(-\int_{\Omega}v\div q\differential x-\IB[q]\right)=|\D v|(\Omega)\,.
\eeq
On the other hand, the Fenchel conjugate of $G$ can be computed as follows:
\beq
G^{*}[v]=\sup_{w\in L^2(\Omega)}\left((v,w)_{L^2(\Omega)}-G[w]\right)=\int_\Omega v^2\ff_1+(1-v)^2\ff_2\differential x\,,
\eeq
where the supremum is attained for $w=2v(\ff_1+\ff_2)-2\ff_2$. This verifies the assertion.

Now, the central insight is that 
\begin{equation}\label{eq:duality}
\DRel[p] = -(\DRel)^\ast[u]
\end{equation} 
for a minimizer $p$ of $\DRel$ and a minimizer $u$ of $(\DRel)^\ast$. This can be seen by formally exchanging $\inf$ and $\sup$ as follows
\begin{align*}
 \DRel[p]=&\inf_{q\in H_N(\div,\Omega)}(F[q]+G[\Lambda q])=\inf_{q\in H_N(\div,\Omega)}\sup_{v\in L^2(\Omega)}(F[q]+\langle v,\Lambda q\rangle-G^{*}[v])\\
 =&\!\!\sup_{v\in L^2(\Omega)}\!\left(-\!\!\!\!\!\sup_{q\in H_N(\div,\Omega)}\!\!(\langle -\Lambda^{*}v,q\rangle-F[q])-G^{*}[v]\right) 
\!=\!\!\!\sup_{v\in L^2(\Omega)}\!\!\!\left(-F^{*}[-\Lambda^{*} v]-G^{*}[v]\right)\\
 =&\!\sup_{v\in L^2(\Omega)}(-(\DRel)^\ast[v])=-\inf_{v\in L^2(\Omega)}(\DRel)^\ast[v]=-(\DRel)^\ast[u]\,.
\end{align*}
A rigorous verification can be found in \cite[Chapter III.4]{EkTe99} (see also \cite{Ro97a,Re00,Ba12}). 
 Furthermore,  one obtains that 
$\bar q\in \Q$ and $\bar v\in \V$ are optimal if and only if  $-\Lambda^* \bar v \in\partial F[\bar q]$ and $\bar v\in\partial G[\Lambda\bar q]$,
which can be deduced from the equivalence 
$J[x]+J^{*}[x']=\langle x',x\rangle\Longleftrightarrow x'\in\partial J[x]\,$
(see \cite[Proposition I.5.1]{EkTe99}).

\section{Functional A Posteriori Estimates for the Relaxed Problem}
\label{sec:APosterioriRelaxed}
In what follows, we investigate a posteriori error estimates associated with the energy
$\DRel[q]=F[q]+G[\Lambda q]$ and its dual $\ERel[v]=F^{*}[-\Lambda^{*}v]+G^{*}[v]$. A crucial prerequisite is the uniform convexity of $G$, which is linked to the specific choice of the relaxed Model $\ERel$.

Recall that a functional $J:X\rightarrow\R$ is \textit{uniformly convex}, if
there exists a continuous functional $\Phi_{J}:X\rightarrow[0,\infty)$ such that 
$J\!\left[\tfrac{x_1+x_2}{2}\right]+\Phi_J(x_2-x_1)\leq \tfrac12(J[x_1]+J[x_2])$
for all $x_1,x_2\in X$ and $\Phi_J(x)=0$ if and only if $x=0$. 
Furthermore, we denote by $\Psi_J$ a non-negative functional such that
$\langle x', x_2-x_1\rangle+\Psi_J(x_2-x_1)\leq J[x_2]-J[x_1]$ for all $x'\in\partial J[x_1]\,.$
Hence, $\Psi_J$ allows a quantification of the strict monotonicity of $J$.
If $J\in C^2$ and $\lambda_{min}$ denotes the smallest 
eigenvalue of  $D^2 J[0]$, then $\Phi_J$ and $\Psi_J$ admit the representation
\beq
\Phi_J(x)=\frac{1}{8}\lambda_{min}(D^2 J[0])\|x\|^2\quad\text{and}\quad
\Psi_J(x)=\frac{1}{2}\lambda_{min}(D^2 J[0])\|x\|^2\,,
\eeq
which follows readily via a Taylor expansion.

Now, the a posteriori error estimate is based on the following direct application of a general result by Repin \cite{Re00}:
Let $\image\in\argmin_{\tilde v\in \V}\ERel[\tilde v]$ and $q\in \Q$, $v\in \V^\prime=\V=L^2(\Omega)$. Then,
\beqn
\Phi_{G^*}(u-v)+\Phi_{F^*}(\Lambda^*(u-v))+\Psi_{\ERel}\!\left(\frac{u-v}{2}\right)\leq \frac{1}{2}(\ERel[v]+\DRel[q])\,.
\label{eq:abstractAPosterioriError}
\eeqn
The proof of \eqref{eq:abstractAPosterioriError} relies on the following two estimates:
At first, 
\begin{eqnarray*}
&&\Phi_{G^*}(u-v)+\Phi_{F^*}(\Lambda^*(u-v))\\
&&\leq
\frac{1}{2}\left(F^*[\Lambda^*v]+G^*[v]+F^*[\Lambda^*\image]+G^*[\image]\right)-\left(F^*\!\left[\Lambda^*\frac{\image+v}{2}\right]+G^*\!\left[\frac{\image+v}{2}\right]\right)\,,
\end{eqnarray*}
which is a consequence of the uniform convexity. Secondly, using the monotonicity and that $\image$ is
a minimizer of $E$ and thus $0\in\partial \ERel[\image]$, we get
\beq
\Psi_{\ERel}\!\left(\frac{\image-v}{2}\right)\leq
F^*\!\left[\Lambda^*\frac{\image+v}{2}\right]+G^*\!\left[\frac{\image+v}{2}\right]
-\left(F^*[\Lambda^*\image]+G^*[\image]\right).
\eeq
The claim follows by adding both estimates and using the fundamental relation 
$\ERel[\image]\geq - \DRel[q]$ known as the \textit{weak complementarity principle} (cf. \cite{EkTe99,Re00, Ba12}).
In the case of the binary Mumford-Shah model, we easily compute
\beq
\Phi_{F^{*}}\equiv 0,\ \Phi_{G^{*}}(v)=\frac{1}{4}\int_\Omega v^2(\ff_1+\ff_2)\differential x, \ \Psi_{\ERel}(v)=\int_\Omega v^2(\ff_1+\ff_2)\differential x
\eeq
and the estimate \eqref{eq:abstractAPosterioriError} implies for any $v\in \V$ and $q\in \Q$ 
\beq
\int_{\Omega}(\image-v)^2(\ff_1+\ff_2)\differential x
\leq \ERel[v]+\DRel[q]\,.
\eeq
Finally, $\frac12(a-b)^2\leq a^2+b^2$ with $a=c_1-u_0$ and $b=c_2-u_0$ yields $\frac{1}{2\nu}(c_1-c_2)^2\leq \ff_1+\ff_2$.
Thus, we obtain the following theorem:
\begin{theorem}\label{thm:first}
Let $\image\in\V$ be the minimizer of $\ERel$. Then, for any $v\in \V$ and $q\in \Q$ it holds that
\beqn
\|\image-v\|_{L^2(\Omega)}^2 \leq \err_\image^2[v,q] :=
\frac{2\nu}{(c_1-c_2)^2}\left(\ERel[v]+\DRel[q]\right)\,.
\label{eq:APosterioriRelaxedMS}
\eeqn
\end{theorem}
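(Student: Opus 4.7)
The plan is to specialise the general Repin-type inequality~\eqref{eq:abstractAPosterioriError} already established at the level of the abstract primal--dual pair $(F,G,\Lambda)$ to the concrete choices made in Section~\ref{sec:primaldual}, and then exploit the specific structure of the binary Mumford--Shah data $\ff_1, \ff_2$ to convert a weighted $L^2$ estimate into a plain $L^2$ estimate with an explicit constant.

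As a first step, I compute the three convexity/monotonicity moduli $\Phi_{F^*}$, $\Phi_{G^*}$, $\Psi_{\ERel}$ for the specific functionals at hand. Since $F=\IB$ yields $F^*[-\Lambda^*\cdot]=|\D\cdot|(\Omega)$, which is convex but not uniformly convex, I expect $\Phi_{F^*}\equiv 0$. For $G^*[v]=\int_\Omega(v^2\ff_1+(1-v)^2\ff_2)\differential x$ a direct computation of $D^2 G^*$ gives the pointwise smallest eigenvalue $2(\ff_1+\ff_2)$, so that $\Phi_{G^*}(w)=\tfrac14\int_\Omega w^2(\ff_1+\ff_2)\differential x$. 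The strong-monotonicity functional $\Psi_{\ERel}(w)=\int_\Omega w^2(\ff_1+\ff_2)\differential x$ for $\ERel=F^*[-\Lambda^*\cdot]+G^*[\cdot]$ should follow from the quadratic part of $G^*$ alone, the $TV$-summand contributing only a nonnegative term via the subgradient inequality for convex functionals.

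Plugging these expressions into~\eqref{eq:abstractAPosterioriError}, the term involving $\Phi_{F^*}$ drops out, while $\Phi_{G^*}(\image-v)$ and $\Psi_{\ERel}\!\left(\tfrac{\image-v}{2}\right)$ combine to $\tfrac12\int_\Omega(\image-v)^2(\ff_1+\ff_2)\differential x$ on the left-hand side, against $\tfrac12(\ERel[v]+\DRel[q])$ on the right. Cancelling the common factor $\tfrac12$ produces the weighted estimate
\[
\int_\Omega(\image-v)^2(\ff_1+\ff_2)\differential x \;\le\; \ERel[v]+\DRel[q].
\]

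To conclude, I eliminate the space-dependent weight. Recalling $\ff_i=\nu^{-1}(c_i-\inputImage)^2$ and applying the elementary inequality $\tfrac12(a-b)^2\le a^2+b^2$ with $a=c_1-\inputImage$ and $b=c_2-\inputImage$, I obtain the uniform pointwise lower bound $\ff_1+\ff_2\ge \frac{(c_1-c_2)^2}{2\nu}$. Dividing the weighted estimate by this constant bounds $\|\image-v\|_{L^2(\Omega)}^2$ exactly by $\frac{2\nu}{(c_1-c_2)^2}(\ERel[v]+\DRel[q])$, as claimed. The main obstacle I anticipate is the rigorous identification of $\Psi_{\ERel}$: because $\ERel$ is not $C^2$ on account of the $BV$-seminorm, the Taylor-expansion shortcut recorded earlier does not apply verbatim, so I would justify the quadratic subgradient inequality by adding the standard quadratic one for $G^*$ to the plain convexity inequality for $F^*[-\Lambda^*\cdot]$, the latter contributing nonnegatively and thus being safely dropped.
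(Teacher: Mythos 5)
Your proposal is correct and follows essentially the same route as the paper: it specialises the abstract Repin estimate \eqref{eq:abstractAPosterioriError} with $\Phi_{F^*}\equiv 0$, $\Phi_{G^*}(w)=\tfrac14\int_\Omega w^2(\ff_1+\ff_2)\differential x$ and $\Psi_{\ERel}(w)=\int_\Omega w^2(\ff_1+\ff_2)\differential x$, combines the two nonzero moduli into $\tfrac12\int_\Omega(\image-v)^2(\ff_1+\ff_2)\differential x$, and then removes the weight via the pointwise bound $\ff_1+\ff_2\ge\tfrac{(c_1-c_2)^2}{2\nu}$ obtained from $\tfrac12(a-b)^2\le a^2+b^2$. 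Your additional remark on rigorously identifying $\Psi_{\ERel}$ for the non-smooth functional --- adding the quadratic monotonicity inequality for $G^*$ to the plain subgradient inequality for the total-variation part --- is exactly the right justification and is only implicit in the paper.
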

In the application, one asks for (post processed) discrete primal $v$  and dual solution $q$ which ensure a small right hand side.  
Additionally, the estimator $\err_\image$ is \textit{consistent}, \ie $\err_\image[v,q]\rightarrow 0$ provided $v$ and $q$ converge to the extrema of the corresponding energy functionals \wrt the topology of the associated Banach spaces.

\section{A Posteriori Error Estimates for the Binary Mumford-Shah Model} \label{sec:APosterioriChi}
In the sequel, we expand the a posteriori theory to the binary Mumford-Shah model. The key observation is that for many images approximate solutions $u\in L^2(\Omega)$ of the relaxed model are characterized by steep profiles, where the actual solution of the original binary Mumford-Shah model jumps. 
Thus, we proceed as follows. We define 
\beq
\jumpArea[v,\eta]=\left\|\chi_{[\frac12-\eta\leq v\leq \frac12+\eta]}\right\|_{L^1(\Omega)}
\eeq
for $\eta\in \left(0,\tfrac12\right)\,$,
which measures the area of the preimage of the interval of size $2\eta$ centered at the 
threshold value $s=\tfrac12$ (cf. Section \ref{sec:relax}). Based on the above observation, the
set $\mathcal{S}_\eta=\left[\tfrac12-\eta\leq v\leq \tfrac12+\eta\right]$ can be regarded as the set of non properly identified phase. 
Taking into account this definition, we obtain the following theorem.

\begin{theorem}[A posteriori error estimator for the binary Mumford-Shah model]\label{thm:apost}
For fixed $c_1$ and $c_2$ let $\chi \in BV(\Omega, \{0,1\})$ be a minimizer of the binary Mumford-Shah functional
$\EnerMS[\cdot,c_1,c_2]$ \eqref{eq:OriginalBinaryModel}. Then for 
all $v\in \V=L^2(\Omega)$ and $q\in \Q=H_N(\div,\Omega)$ we have that 
\beqn
\left\|\chi - \chi_{[v>\frac12]}\right\|_{L^1(\Omega)}\leq \err_\chi[v,q] 
:= \inf_{\eta \in (0,\frac12)}\left(\jumpArea[v,\eta]+\frac{1}{\eta^2}\err_u^2[v,q]\right)\,.
\label{eq:errorEstimatorBinary}
\eeqn
\end{theorem}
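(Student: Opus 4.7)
The plan is to combine Proposition \ref{ConvexRelaxationBinaryMS} with Theorem \ref{thm:first} via a simple Markov-type argument. First I identify the minimizer: since $\ERel$ is strictly convex, its minimizer $\image$ is unique, and by Proposition \ref{ConvexRelaxationBinaryMS} the function $\chi_{[\image>\frac12]}$ is a minimizer of the binary Mumford-Shah functional, so without loss of generality I take $\chi=\chi_{[\image>\frac12]}$. This reduces the task to estimating the Lebesgue measure of the symmetric difference between the level sets $[\image>\tfrac12]$ and $[v>\tfrac12]$.

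Fix $\eta\in(0,\tfrac12)$ and split $\Omega$ into the \emph{ambiguous strip} $\mathcal{S}_\eta=[\tfrac12-\eta\leq v\leq \tfrac12+\eta]$ and its complement. On $\mathcal{S}_\eta$ I use the trivial bound $|\chi-\chi_{[v>\frac12]}|\leq 1$, which contributes exactly $\jumpArea[v,\eta]$ to the $L^1$-norm. The key observation is made on $\Omega\setminus\mathcal{S}_\eta$: if $x$ lies outside the strip and the two characteristic functions differ at $x$, then either $v(x)>\tfrac12+\eta$ and $\image(x)\leq\tfrac12$, or $v(x)<\tfrac12-\eta$ and $\image(x)>\tfrac12$. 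In both cases $|\image(x)-v(x)|>\eta$.

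Consequently, the contribution from $\Omega\setminus\mathcal{S}_\eta$ is bounded by the measure of $[|\image-v|>\eta]$, to which I apply Chebyshev's inequality:
\beq
\left|\{x\in\Omega\setminus\mathcal{S}_\eta: \chi(x)\neq\chi_{[v>\frac12]}(x)\}\right|\leq \left|[|\image-v|>\eta]\right|\leq \frac{1}{\eta^2}\|\image-v\|_{L^2(\Omega)}^2\,.
\eeq
Now Theorem \ref{thm:first} yields $\|\image-v\|_{L^2(\Omega)}^2\leq \err_\image^2[v,q]$, so summing the two pieces gives
\beq
\left\|\chi-\chi_{[v>\frac12]}\right\|_{L^1(\Omega)}\leq \jumpArea[v,\eta]+\frac{1}{\eta^2}\err_\image^2[v,q]\,,
\eeq
and taking the infimum over $\eta\in(0,\tfrac12)$ concludes the proof.

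There is no real obstacle here; the argument is essentially a clean combination of thresholding and Chebyshev's inequality. The only conceptual subtlety is the non-uniqueness of the binary minimizer $\chi$, which I bypass by fixing the canonical representative $\chi_{[\image>\frac12]}$ produced by Proposition \ref{ConvexRelaxationBinaryMS}; the estimate then holds for this distinguished minimizer, which is the meaningful one from the numerical point of view.
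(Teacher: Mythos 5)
Your proof is correct and follows essentially the same route as the paper: the same decomposition of the symmetric difference $[\image>\tfrac12]\Delta[v>\tfrac12]$ into the strip $\mathcal{S}_\eta$ and the set $[|\image-v|>\eta]$, followed by Chebyshev's inequality and Theorem \ref{thm:first}. Your explicit remark that the bound is established for the distinguished minimizer $\chi_{[\image>\frac12]}$ is a fair observation; the paper makes the same identification implicitly in the remark following the theorem.
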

Let us remark that $\chi_{[v>\frac12]}$ is the result of the same thresholding, which relates $\chi$ to the solution $u$ of the relaxed problem \eqref{eq:relaxedMS}, \ie $\chi=\chi_{[u>\frac12]}$, this time applied to $v$. 
\begin{proof}
Recall that any minimizer $\image$ of the $\ERel$ fulfills $0\leq\image\leq 1$. For all $\eta \in (0,\tfrac12)$ we obtain the following set relation for the symmetric difference of the sets 
$[\image>\frac12]$ and $[v>\frac12]$ ($\Delta$ denoting the symmetric difference of two sets):
\beq
\left[\image>\tfrac12\right]\Delta\left[v>\tfrac12\right] \subseteq\left\{x \in \Omega\,\big|\, \tfrac12-\eta\leq v(x)\leq\tfrac12+\eta\right\}\cup\left\{x\in \Omega \,\big|\, |\image-v(x)|>\eta\right\}\,.
\eeq
Now, using Theorem \ref{thm:first} the Lebesgue measure of the rightmost set can be estimated as follows
\beq
\Lebesgue^n(|\image-v|>\eta)\leq \int_{\{|\image-v|>\eta\}}\frac{|\image-v|^2}{\eta^2}\differential x\leq \frac{1}{\eta^2}\err_\image^2[v,q]\,,
\eeq
where $\eta\in(0,\frac12)$. Finally, taking the infimum for all  $\eta \in (0,\tfrac12)$ concludes the proof.
\end{proof}

In the application, the computational cost to find the optimal $\eta$ is of the order of the degrees of freedom for the discrete solution and thus affordable. 
Let us emphasize that the error estimator $\err_\chi$ is not tailored to a specific finite element approach. Indeed, we can project any primal and dual solution 
onto the spaces $\V = L^2(\Omega)$ and $\Q=H_N(\div,\Omega)$, respectively. 
We will exploit this in the next sections.

\section{Finite Element and Finite Difference Discretization}\label{sec:FE}
In this section, we investigate different numerical approximation schemes for the primal and the dual solution of the relaxed problem \eqref{eq:relaxedMS}
on adaptive meshes and the refinement of the meshes based on the a posteriori error estimate in Theorem \ref{thm:apost}. 
In the context of image processing applications with input images usually given on a regular rectangular mesh, 
an adaptive quadtree for $n=2$ (or octree for $n=3$) turned out to be an effective choice for an adaptive mesh data structure. 
In what follows, we pick up the finite element approach for a variational problem on $BV$ proposed by Bartels \cite{Ba13} and a simplified version of the latter. 
Furthermore, we consider the widespread finite different scheme proposed by Chambolle \cite{Ch04}.

\paragraph{(FE) Finite element scheme on an induced adaptive triangular grid.}
We consider $\Omega=[0,1]^2$ in all numerical experiments in this paper. 
On this domain, we consider 
an adaptive mesh $\quadMesh$ described by a quadtree with cells $\mathscr{C}\in\quadMesh$ being squares, which are recursively refined into four squares via an edge bisection.
We suppose that the level of refinement between cells at edges differs at most by one. 
Thus, on a single edge at most one hanging node appears. Let $h$ indicate the spatially varying mesh size function on $\Omega$, 
\ie in the range of an initial mesh size $2^{-L_{\mbox{\tiny init}}}$ and a finest mesh size $2^{-L_0}$ (usually determined by the image resolution). 
For all discretization approaches investigated here, the degrees of freedom are associated with the non hanging nodes.
Let us denote by $N_v$ the number of these nodes, which will coincide with the number of degrees of freedom of discrete primal functions.
The finite element discretization is based on a triangular mesh $\simplexMesh$ spread over the adaptive quadtree mesh via a
splitting of each quadratic leaf cell  into simplices $\mathscr{T}$ ( ``cross subdivision''). 
We ask for discrete primal functions $u_h$ in the space of piecewise affine and globally continuous functions on $\simplexMesh$ 
denoted by $\mathcal{V}_h$.  Thus, for functions $v_h \in \mathcal{V}_h$ the values at hanging nodes are interpolated based on the values at adjacent non hanging nodes, 
which are associated with the actual degrees of freedom. By $\Q_h = \left\{q_h\in \mathcal{V}^N_h :\ q_h\cdot \nu=0\text{ on }\partial\Omega\right\}$ we denote the discrete counterpart of $\Q$. To accommodate this boundary condition, the boundary nodes are modified after each update of the dual solution
in a post-processing step. 
On $\mathcal{V}_h$, we define discrete counterparts of the continuous functionals $F$ and $G$ as follows:
\begin{eqnarray*}
G_h[v_h] := \int_\Omega  \frac{\frac14 v_h^2 + v_h\theta_{2,h} - \theta_{1,h} \theta_{2,h}}{\theta_{1,h} + \theta_{2,h}} \differential x\,, \quad
F_h[q_h] :=  I_{\bar B_1}[q_h]\,,
\end{eqnarray*}
where $\theta_{i,h}= \mathcal{I}_h(\theta_{i})=\mathcal{I}_h(\tfrac{1}{\nu}(c_i-u_0 )^2)$ for $i=1,2$
with $\mathcal{I}_h$ denoting the Lagrange interpolation. 
In the application on images, we suppose that $u_0 \in \mathcal{V}_0$, where $\mathcal{V}_0$ is the simplicial finite element space corresponding to the full resolution image on the finest grid level 
$L_0$ representing the full image resolution.
Furthermore, we consider two different scalar products. On $\V_h$, we take into account the $L^2$-product and on $\Q_h$ the lumped mass product 
$(q_h,p_h) \mapsto \int_\Omega \mathcal{I}_h(q_h p_h) \differential x$ and identify $\V_h$ and $\Q_h$ with their dual spaces with respect to the $L^2$- and the lumped mass product, respectively.
Then, the associated dual operators are 
\begin{eqnarray*}
G_h^\ast[v_h] = \int_\Omega v_h^2 \theta_{1,h} + (1-v_h)^2 \theta_{2,h} \differential x \,, \quad 
F_h^\ast[q_h] =  \int_\Omega \mathcal{I}_h(|q_h|) \differential x\,.
\end{eqnarray*}
Finally, we define the discrete divergence $\Lambda_h: \Q_h \to \V_h$, $q_h \mapsto \proj\div q_h$, where $\proj$ denotes the $L^2$-projection $\proj:L^2(\Omega) \rightarrow \mathcal{V}_h$.
Following Bartels \cite{Ba12} and taking into account the above scalar products on $\V_h$ and on $\Q_h$,
we obtain for the discrete gradient $-\Lambda_h^\ast:\V_h \to \Q_h$, $v_h \mapsto -\Lambda_h^\ast v_h$, the defining duality
\begin{equation}
\int_\Omega \mathcal{I}_h(-\Lambda_h^\ast v_h \cdot q_h) \differential x = \int_\Omega v_h \proj \div q_h \differential x
\label{eq:definitionLumpedGradient}
\end{equation}
for all $q_h\in \Q_h$ and $v_h \in \V_h$.

\paragraph{(FE') Finite element scheme based on a simple gradient operator.}
Instead of the above defined discrete gradient operator $-\Lambda_h^\ast$, we alternatively consider the 
piecewise constant gradient $\nabla v_h$  on the simplices $\mathscr{T}$ of the simplicial mesh for functions $v_h \in \V_h$. To this end, we 
choose $\Q_h$ as the space of piecewise constant functions on the simplicial mesh, and take into account the standard $L^2$-product on both spaces.
The above definitions of the functionals $G_h$ and $F_h$ are still valid. Moreover, $G^\ast_h$ remains the same, only $F^\ast_h$  changes to 
$F_h^\ast[q_h] =  \int_\Omega |q_h| \differential x$.  The discrete divergence $\Lambda_h: \Q_h \to \V_h$ is defined via duality starting from the preset discrete gradient as  
\[
\int_\Omega \mathcal{I}_h(\Lambda_h q_h v_h) \differential x = - \int_\Omega q_h  \cdot \nabla v_h \differential x\,,
\]
which indeed ensures that $-\Lambda_h^\ast v_h = \nabla v_h$. This simplified ansatz leads to a non-conforming iterative solution scheme (see Section \ref{sec:algo}),
since the space of piecewise constant finite elements is not contained in $H_N(\div,\Omega)$ (cf. \cite{Ba13}). 
After each modification of the (piecewise constant) dual solution the values on the corresponding boundary cells are set to $0$ to satisfy the boundary condition.
To apply the derived a posteriori error estimates a projection onto the space $H_N(\div,\Omega)$ is required.
To this end, we replace the solution $p_h\in \Q_h$  by its $L^2$-projection onto the space $\V_h^n$.

\paragraph{(FD) Finite difference scheme on a regular mesh.} 
The finite difference scheme for the numerical solution of functionals on $BV$ proposed by Chambolle \cite{Ch04}
is extensively used in many computer vision applications and applies to image data defined on a structured non adaptive mesh. 
We compare the a posteriori error estimator for this scheme on non adaptive meshes with the above finite element schemes on adaptive meshes. 
To this end, we denote by $\Vh \in \R^{N_v}$ and $\Qh \in \R^{2N_v}$ nodal vectors on the regular lattice for primal and dual solutions, respectively.
Here, $N_v = (h^{-1}+1)^2$, where  $h$ denotes  the fixed grid size of the finite difference lattice.
Integration is replaced by summation and we obtain the following discrete analogues $\Gh$ and $\Fh$ of the continuous functionals $F$ and $G$ as functions on 
$\R^{N_v}$ and $\R^{2 N_v}$, respectively:
\begin{eqnarray*}
\Gh[\Vh] := \sum_{i=1}^{N_v}  \left( \frac{\frac14 (\Vh^i)^2 + \Vh^i\ffhtwoi - \ffhonei \ffhtwoi}{\ffhonei + \ffhtwoi} \right)\,, \quad
\Fh[\Qh] :=  \max_{i=1,\ldots, N_v} \mathbf{I}_{\bar B_1}[\Qh^i]
\end{eqnarray*}
with $\ffhonei$, $\ffhtwoi$ denoting the pointwise evaluation of $\theta_1$ and $\theta_2$, respectively, and $\mathbf{I}_{\bar B_1}[\Qh^i]=0$ for $|\Qh^i|\leq 1$ and $+\infty$ otherwise.
The associated dual operators for the standard Euclidean product as the duality pairing are 
\begin{eqnarray*}
\Ghstar[\Vh] = \sum_{i=1}^{N_v}   (\Vh^i)^2 \ffhonei + (1-\Vh^i)^2 \ffhtwoi \,, \quad
\Fhstar[\Qh] = \sum_{i=1}^{N_v} \left|\Qh^i\right|\,.
\end{eqnarray*}
Finally, we take into account periodic boundary conditions (by identifying degrees of freedom on opposite boundary segments) and 
use forward difference quotients to define the discrete gradient operator $-\mathbf{\Lambda}_h^\ast : \R^{N_v} \to \R^{2 N_v}$, \ie
\beq
((-\mathbf{\Lambda}_h^\ast) \Vh)^i = \left(\frac{\Vh^{\neigh(i,j)}-\Vh^i}{h} \right)_{j=1,2}\,,
\eeq
where $\neigh(i,j)$ is the index of the neighboring node in direction of the $j$th coordinate vector. 
As a consequence, the matrix representing the discrete divergence operator $\mathbf{\Lambda}_h : \R^{2 N_v} \to \R^{N_v}$ is just the  negative
transpose of the matrix representing the discrete gradient and thus corresponds to a discrete divergence based on backward difference quotients.

To use the a posteriori error estimate in the finite difference context, we consider as a simplest choice the piecewise bilinear functions $u_h$ and $p_h$ 
uniquely defined by the solution vectors $\Uh$ and $\Ph$, respectively.
The boundary condition is taken care of in exactly the same way as in the case (FE).

\section{Implementation based on a Primal-Dual Algorithm}\label{sec:algo}
For the numerical solution of the different discrete variational problems, we use the primal-dual 
algorithm proposed by Chambolle and Pock \cite[Algorithm 1]{ChPo11}, which computes
both a discrete primal and a discrete dual solution to be used in the a posteriori error estimates.
Note that we use \cite[Algorithm 1]{ChPo11} instead of \cite[Algorithm 2]{ChPo11} even though $\Ghstar$ is uniformly convex. 
As we will see later, evaluating $(\Id+\tau\partial\Ghstar)^{-1}$ requires the inversion of a matrix depending on $\tau$. 
In Algorithm~1, $\tau$ is fixed and the inverse can be computed once using a Cholesky decomposition for the 
sparse, symmetric and positive-definite matrix (for details see \cite{ChDaHa09}), 
while in Algorithm~2 the decomposition of the linear system has to be performed in each iteration.
Before we discuss this algorithm in the more conventional matrix-vector notation, let us rewrite the finite element approaches correspondingly.
Let $N_v= \mathrm{dim}\, \V_h$ (the number of non hanging nodes) and $N_q = \mathrm{dim}\, \Q_h$ (for (FE) $N_q = 2 N_v$ and for (FE') $N_q$ is $2$ times the number of simplices).
In what follows, we will use uppercase letters to denote a vector of nodal values, \eg $\Vh^i = v_h(X^i)$ if $X^i$ is the $i$th non hanging node.
The two scalar products are encoded via mass matrices. 
Here, $\Mass \in \R^{N_v,N_v}$ represents the standard $L^2$-product on $\V_h$ and is given by 
 $\Mass\Vh\cdot\Uh = \int_\Omega v_h u_h \differential x$  for all  $v_h,\, u_h \in \mathcal{V}_h$. 
Furthermore, $\tildeMass\in \R^{N_q,N_q}$ is the mass matrix associated with the space $\Q_h$. For the approach (FE)  it is given as the lumped mass matrix with 
$\tildeMass\Ph\cdot\Qh = \int_\Omega \mathcal{I}_h (p_h \cdot q_h) \differential x$ for all $p_h,\, q_h \in \Q_h$, 
whereas for the discretization (FE')  $\tildeMass\Ph\cdot\Qh = \int_\Omega p_h \cdot q_h \differential x$ 
for all $p_h,\, q_h \in \Q_h$ defines a classical (diagonal) mass matrix. 
For the matrix representations $\mathbf{\Lambda}_h$ and $-\mathbf{\Lambda}_h^\ast$ of the discrete divergence and the discrete gradient, respectively, 
we obtain the relation (cf. \cite{Ba12})
\beqn
\mathbf{\Lambda}_h^\ast = \tildeMass^{-1} \mathbf{\Lambda}_h^T \Mass\,.
\label{eq:adjointRelation}
\eeqn
For the discretization (FD) we have $\mathbf{\Lambda}_h^\ast =\mathbf{\Lambda}_h^T$.
Altogether, the discrete predual energy $\DRelh: \R^{N_q} \to \R$ and the discrete energy $\ERelh:\R^{N_v}\to\R$ are defined as follows:
\beq
\DRelh[\Qh] = \Fh[\Qh]+ \Gh[\mathbf{\Lambda}_h \Qh] \,,
\quad 
\ERelh[\Vh] = \Fhstar[-\mathbf{\Lambda}_h^\ast\Vh]+ \Ghstar[\Vh] \,.
\eeq
In the case of  both finite element  schemes $\Gh, \Fh, \Ghstar$ and $\Fhstar$ are defined using the corresponding functions on the finite element spaces, e.g. $\Ghstar[\Vh]:=G_h^\ast[v_h]$.
Now, we are in the position to formulate the primal-dual algorithm.
For a fixed mesh and initial data $(\Uh^0,\Ph^0)\in \R^{N_v} \times \R^{N_q}$
the Algorithm \ref{algo:ChambollePock} proposed by Chambolle and Pock \cite[Algorithm 1]{ChPo11} computes a sequence $(\Uh^k,\Ph^k)$,
which converges to the tuple $(\Uh,\Ph)$ of the discrete primal and dual solution provided $\tau\sigma\|\mathbf{\Lambda}_h\|^2<1$.
\begin{algorithm}[htb]
$k=0$\;
\While{$\|\Uh^{k+1}-\Uh^{k}\|_{\infty}>$THRESHOLD}{
$\Ph^{k+1}=(\Id+\sigma\partial \Fh)^{-1}(\Ph^k-\sigma\Lhstar\barUh^k)$\;
$\Uh^{k+1}=(\Id+\tau\partial \Ghstar)^{-1}(\Uh^k+\tau \Lh\Ph^{k+1})$\; 
$\barUh^{k+1}=2\Uh^{k+1}-\Uh^k$\;
$k=k+1$\;
}
\caption{The primal-dual algorithm used to minimize $\ERelh$.}
\label{algo:ChambollePock}
\end{algorithm}
Indeed, using inverse estimates for finite elements (see \cite{RiWa10} for a computation of the constants) the operator norm
can be bounded in the case (FE') as follows: $\|\mathbf{\Lambda}_h\|^2\leq 48(3+2 \sqrt{2})h_{min}^{-2}\approx 279.8 \,h_{min}^{-2}$,
where $h_{min}$ denotes the minimal mesh size occurring in $\quadMesh$.
Moreover, to estimate the operator norm for the case (FE) we use \eqref{eq:definitionLumpedGradient}
and obtain 
\begin{align*}
\|\mathbf{\Lambda}_h\|^2=& 
\left(\max_{v_h\in\V_h,\|v_h\|_{L^2}=1}\ \max_{q_h\in\Q_h,\|q_h\|_{L^2}=1}
\int_\Omega \mathcal{I}_h(-\Lambda_h^\ast v_h \cdot q_h) \differential x\right)^2\\
=&\left(\max_{v_h\in\V_h,\|v_h\|_{L^2}=1}\ \max_{q_h\in\Q_h,\|q_h\|_{L^2}=1}\int_\Omega v_h \proj \div q_h \differential x\right)^2\\
\leq& \max_{\substack{q_h\in\Q_h,\\\|q_h\|_{L^2}=1}}\|\proj \div q_h\|_{L^2(\Omega)}^2\leq \max_{\substack{q_h\in\Q_h,\\\|q_h\|_{L^2}=1}}\|\div q_h\|_{L^2(\Omega)}^2
\leq 96(3+2 \sqrt{2})h_{min}^{-2}\,.
\end{align*}
Finally, following \cite{Ch04} we can estimate $\|\mathbf{\Lambda}_h\|^2\leq 8h_{min}^{-2}$ for the discretization (FD).

Suitable stopping criteria are a threshold on the primal-dual gap $\ERelh[\Uh^k]+\DRelh[\Ph^k]$ or on the 
maximum norm of the difference of successive solutions $\Uh^{k+1}-\Uh^{k}$ (which we apply here).
To compute the resolvents $(\Id+\partial \Fh)^{-1}[\Qh]$ and $(\Id+\partial \Ghstar)^{-1}[\Vh]$ 
we use a variational ansatz (for details see \cite{Ro97a}), 
\ie for the resolvent of a subdifferentiable functional $J$ with an underlying scalar product $(\cdot, \cdot)$ we have that
\[(\Id+\tau \partial J)^{-1}[x]= \argmin_{y} (x-y,x-y) + 2 \tau J(y)\,.\]
The resolvent of $\Fh$ for the approaches (FE) and (FD) is given by
\beq
(\Id+\sigma \partial\Fh)^{-1}[\Qh]=\left(\tfrac{\Qh^i}{\max\left\{\left|\Qh^i\right|,1\right\}}\right)_{i=1,\ldots N_v}
\eeq 
with $\Qh^i = q_h(X^i)$. In the case (FE'), the above evaluation is performed on each cell.
For the discretizations (FE) and (FE'),  we denote by $\Mass[\Wh]\Uh\cdot\Vh=\int_{\Omega}w_h\,u_h v_h \differential x$ 
the weighted mass matrix for functions $u_h,v_h\in\V_h$ and weight $w_h\in\V_h$.
Then, the resolvent of $\Gh$ reads as
\beq
(\Id+\tau\partial\Ghstar)^{-1}[\Vh]=\left(\Mass\!\left[1+2\tau(\ffhone+\ffhtwo)\right]\right)^{-1}\Mass\left(\Vh+2\tau\ffhtwo\right)\,.
\eeq
In the case (FD), the resolvent is given by
\beq
(\Id+\tau\partial\Ghstar)^{-1}[\Vh^i]=\frac{\Vh^i+2\tau\ffhtwoi}{1+2\tau\left(\ffhonei+\ffhtwoi\right)}\qquad\text{for }1\leq i \leq N_v\,.
\eeq

\noindent  In our numerical experiments, we have chosen  
THRESHOLD $ = 10^{-7}$. For the application considered here, this algorithm turned out to be about $20 \% - 30 \%$ faster than the 
alternating descent method for the Lagrangian for instance used by Bartels \cite[Algorithm A']{Ba12}.

The adaptive mesh refinement is implemented as follows. Given a mesh and initial data for the primal and dual solution, we run the above algorithm and compute the relaxed discrete primal-dual solution pair
$(u_h, p_h)$. In case of the finite difference approach (FD), we define them as the multilinear interpolation on the cells $\mathscr{C}$ 
of the regular mesh.
The corresponding discrete solution of the original problem \eqref{eq:MS} is then given as 
$\chi_h = \chi_{[u_h > \frac12]}\,.$
Based on $u_h$ and $p_h$, we evaluate the local error estimator for every cell $\mathscr{C}_0$ of the full resolution image grid as follows:
\begin{align*}
\err_{\image,\mathscr{C}_0}^2[u_h,p_h] := \frac{2\nu}{(c_1-c_2)^2}\left(\vphantom{\frac{\frac{1}{4}(\div p_h)^{2}+ \div p_h \ff_2-\ff_1\ff_2}{\ff_1+\ff_2}}\right.
\int_{\mathscr{C}_0} &u_h^2 \ff_1+(1-u_h)^2 \ff_2 + |\nabla u_h|   \\
&\left.+ \frac{\frac{1}{4}(\div p_h)^{2}+ \div p_h \ff_2-\ff_1\ff_2}{\ff_1+\ff_2} \differential x\right) \,.
\end{align*}
To this end, a higher order Gaussian quadrature is used. In fact, for (FE) and (FE') we use a Gaussian quadrature of order $4$  
on the simplices $\mathscr{T}_0$ composing the cell $\mathscr{C}_0$ on the finest mesh with full image resolution, where the $\theta_i$ ($i=1,2$) are originally defined,
and for (FD) a Gaussian quadrature of order  $5$ directly on the cells $\mathscr{C}_0$.
The resulting local error estimator for a cell $\mathscr{C}\in\quadMesh$ and the global estimator are given as
\[
\err_{\image,\mathscr{C}}^2[u_h,p_h]  = \sum_{\mathscr{C}_0 \subset \mathscr{C}} \err_{\image,\mathscr{C}_0}^2[u_h,p_h]
\quad\text{and}\quad
\err_{\image}^2[u_h,p_h]  = \sum_{\mathscr{C} \in \quadMesh} \err_{\image,\mathscr{C}}^2[u_h,p_h]\,,
\]
respectively.
We mark those cells $\mathscr{C}$ for refinement for which 
\[
\err_{\image,\mathscr{C}}^2[u_h,p_h] \geq \alpha \max_{\mathscr{C}'\in\quadMesh}\err_{\image,\mathscr{C}'}^2[u_h,p_h]\,,
\]
where $\alpha$ is a fixed threshold in $(0,1)$.  Since this method is prone to outliers,
we additionally sort all local estimators $\err_{\image,\mathscr{C}}^2$ according to their size (starting with the smallest) and mark the cells 
in the upper decile for refinement as well. For the input data from Figure \ref{fig:inputImage} we refine up to the resolution of the initial image.

\section{Numerical Results}\label{sec:results}
In what follows, we show numerical results for four different input images shown in Figure~\ref{fig:inputImage}.
Prior to executing Algorithm \ref{algo:ChambollePock}, we choose suitable values for $c_1$ and $c_2$ by applying 
Lloyd's Algorithm (see \cite{Ll82})
for the  computation of a $2$-means clustering (with initial values $0$ and $1$).  
The resulting values are given in Figure~\ref{fig:inputImage} together with the 
parameters values for $\nu$.
\begin{figure}[htb]
\centerline{
\footnotesize 
\begin{tabular}{lcccc}
&\includegraphics[width=0.195\linewidth]{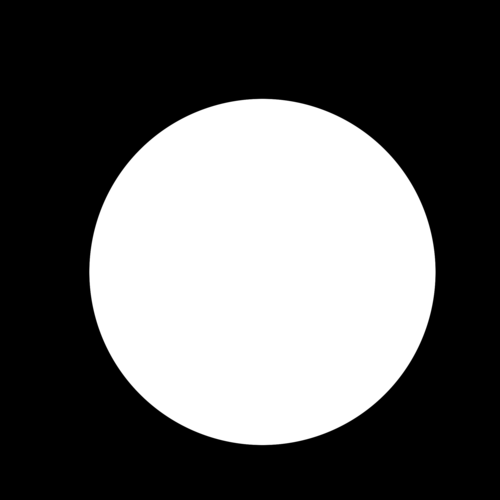} &
\includegraphics[width=0.195\linewidth]{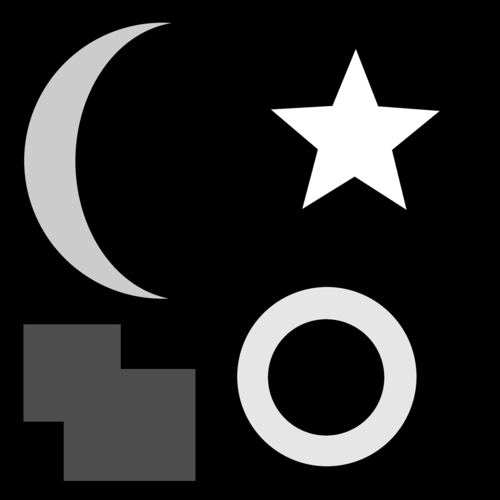} &
\includegraphics[width=0.195\linewidth]{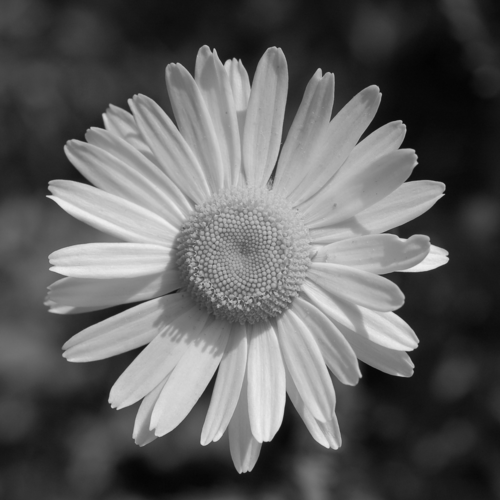} &
\includegraphics[width=0.195\linewidth]{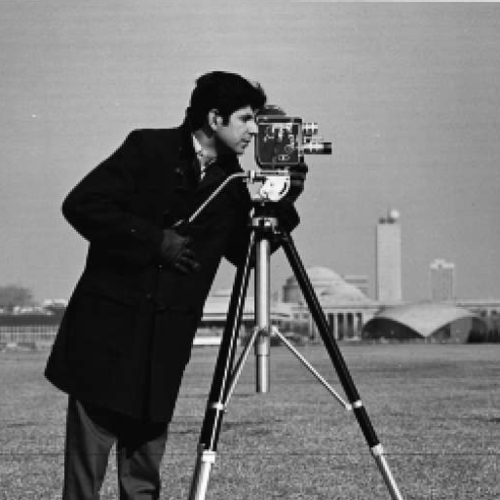} \\
image & (a) & (b) & (c) & (d) \\
resolution & $2049\times2049$ & $2049\times2049$ & $2049\times2049$ & $513\times513$\\
$c_1$ & $0.999772$ & $0.893734$ & $0.664404$ & $0.602566$\\
$c_2$ & $1.99\cdot 10^{-4}$ & $0.030416$ & $0.167763$ & $0.092273$\\
$\nu$ & $5\cdot10^{-3}$ & $5\cdot10^{-3}$ & $10^{-3}$ & $5\cdot10^{-3}$
 \end{tabular}
 }
 \caption{Input images together with the corresponding image resolution and the model parameters $c_1$, $c_2$, and $\nu$
 (flower image: photo by Derek Ramsey, Chanticleer Garden, cameraman image: copyright by Massachusetts Institute of Technology).}
 \label{fig:inputImage}
\end{figure}
The pixels of the input images are interpreted as nodal values of the function $u_0$ on a uniform mesh with 
mesh size $h=2^{-L_0}$ ($L_0 = 11\; ((a),(b),(c)),\; 9 \;(d)$). The algorithm is then started on a uniform mesh of mesh size $h=2^{-L_{\mbox{\tiny init}}}$
($L_{\mbox{\tiny init}} = 5\; ((a),(b),(c)),\; 3 \;(d)$). In all computations we use $\alpha=0.2$, $\tau=10^{-5}$ and $\sigma=5\cdot 10^{-5}$.
We perform $10$ cycles of the adaptive algorithm and refine cells until the depth $L_0$ of the input image is reached.

Applying the primal-dual algorithm, we observe local oscillations for both finite element approaches (FE) and (FE'),
which deteriorate the result of the a posteriori estimator (cf. the numerical results in \cite{Ba12}).
Thus, in a post-processing step, we compensate these oscillations prior to the evaluation of the estimator by an 
application of a smoothing filter.
The filter is defined
via an implicit time step of the discrete heat equation using affine finite elements on the underlying adaptive mesh,
\ie we apply the operator $(\Mass+\iota\Stiff)^{-1}\Mass$ to the solutions, where $\Stiff$ denotes the stiffness matrix.
For the discretization (FE), we choose $\iota=c\cdot h_{min}^2$, where
$h_{min}$ denotes the minimal mesh size of the current adaptive grid, with
$c=3$ and $c=6$ for the primal and the dual solution, respectively.
Moreover, in the case (FE') the smoothing is only applied to the dual solutions
with parameter $\iota=0.75~\cdot~h_{a}^{0.9}$, where $h_{a}$ denotes the average cell size on the adaptive mesh.
In our experiments we observed that these smoothing methods and parameters outperformed other tested choices for the corresponding discretizations.
We call the resulting postprocessed functions $\bar u_h$ and $\bar p_h$, respectively, and replace the local error estimator by
$\err_{\image,\mathscr{C}}^2[\bar u_h,\bar p_h]$.

\begin{table}[htb]
\begin{center}
\begin{tabular}{r l|c | c | c | c }
& & (a) & (b) & (c)& (d) \\ \hline 
\multirow{3}{*}{$\frac{2\nu}{(c_1-c_2)^2}E[u_h]$} &
(FE) & 0.022429 & 0.078497 & 0.124740 & 0.204202 \\ 
&(FE')& 0.022256 & 0.078012 & 0.122620 & 0.202534 \\ 
&(FD) & 0.022493 & 0.078814 & 0.122777 & 0.206166\\
\hline
\multirow{3}{*}{$\frac{2\nu}{(c_1-c_2)^2}D[p_h]$} &
(FE) & -0.021735 & -0.075977 & -0.117259 & -0.182598 \\ 
&(FE')& -0.020950 & -0.070986 & -0.110463 & -0.165980 \\ 
&(FD) & -0.021520 & -0.075455 & -0.119865 & -0.182300\\ \hline
\multirow{3}{*}{$\err_\image^2$} &
(FE) & 0.000694 & 0.002521 & 0.007481 & 0.021603 \\ 
&(FE') & 0.001306 & 0.007025 & 0.012156 & 0.036554 \\ 
&(FD) & 0.000973 & 0.003359 & 0.002912 & 0.023866 \\
\hline
\multirow{3}{*}{$\eta_{optimal}$} &
(FE) & 0.39 & 0.3275 & 0.2825 & 0.305 \\ 
&(FE') & 0.45 & 0.3675 & 0.29 & 0.3925 \\ 
&(FD) & 0.4375 & 0.345 & 0.24 & 0.305\\
\hline
\multirow{3}{*}{$\err_\chi$} &
(FE) & 0.008904 & 0.038779 & 0.175125 & 0.393884 \\ 
&(FE') & 0.009952 & 0.071348 & 0.236609 & 0.65235\\ 
&(FD) & 0.008223 & 0.0425225 & 0.109039 & 0.413503\end{tabular}
\caption{Rescaled dual and primal energy evaluated on the discrete solution $(u_h,p_h)$, error estimator for the relaxed solution, optimal threshold $\eta_{optimal}$  
computed for $u_h$ and resulting a posteriori estimator $\err_\chi$ for the $L^1$-error of the characteristic function $\chi$ (after 10 cycles of the adaptive algorithm).}
\label{table:finalResults}
\end{center}
\end{table}
\begin{figure}[htb]
 \includegraphics[width=0.49\linewidth]{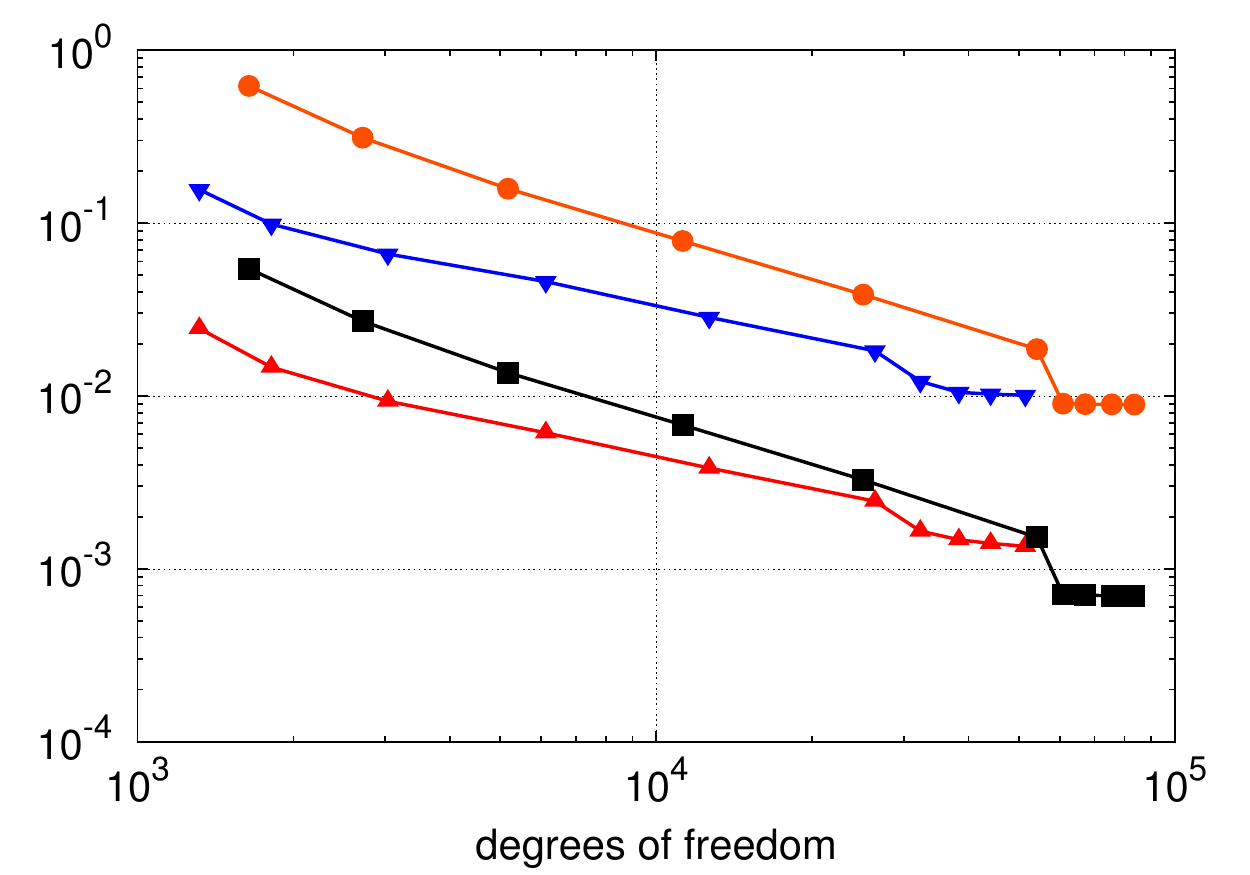}
 \hfill
 \includegraphics[width=0.49\linewidth]{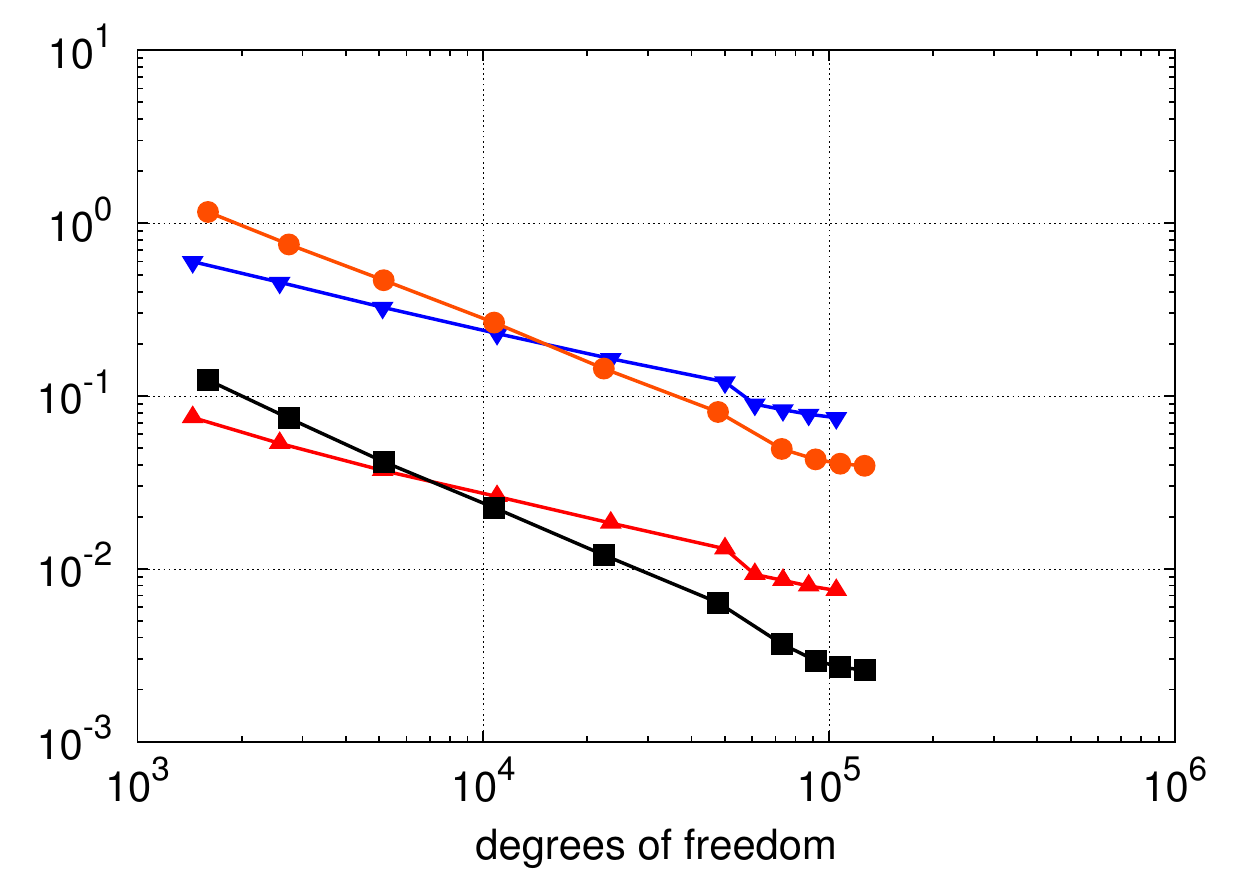}
 \hfill
 \includegraphics[width=0.49\linewidth]{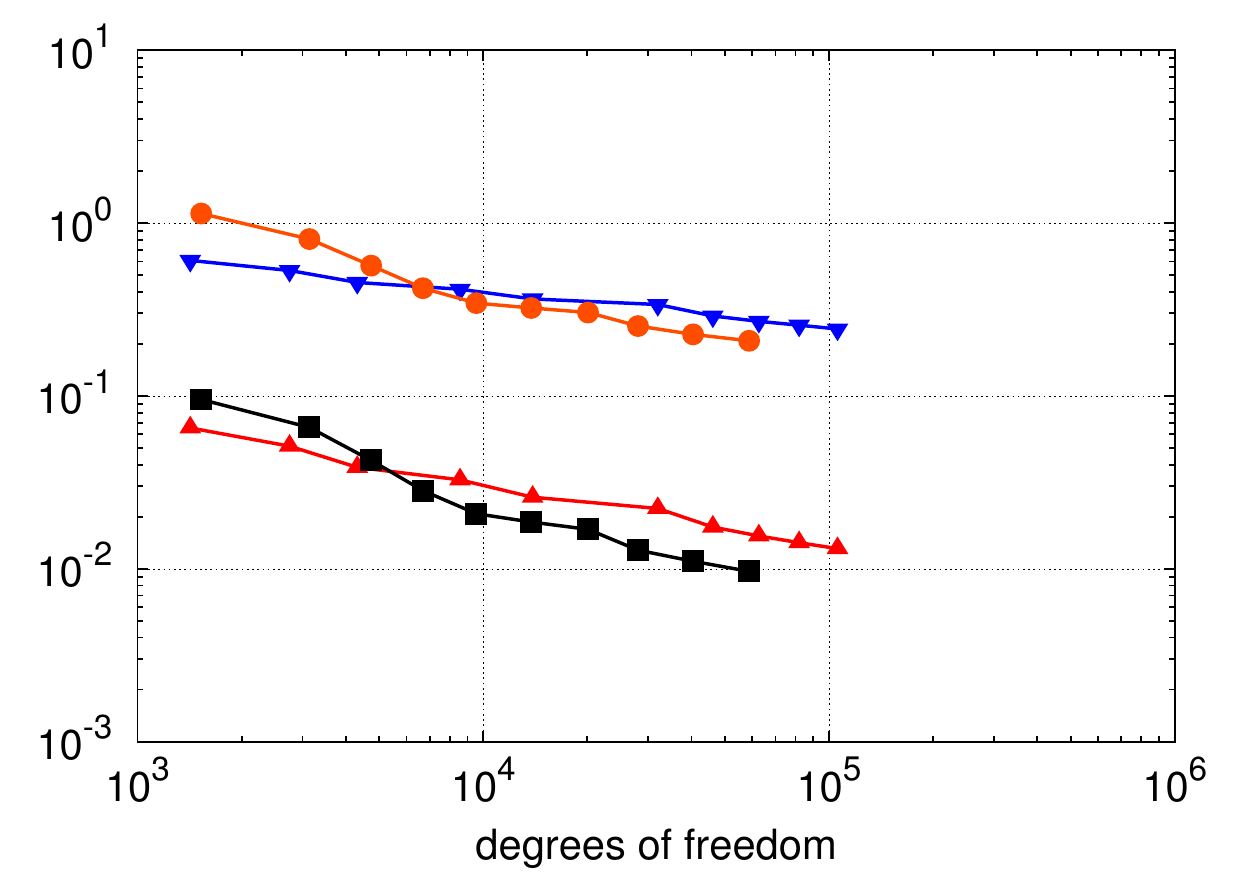}
  \hfill
 \includegraphics[width=0.49\linewidth]{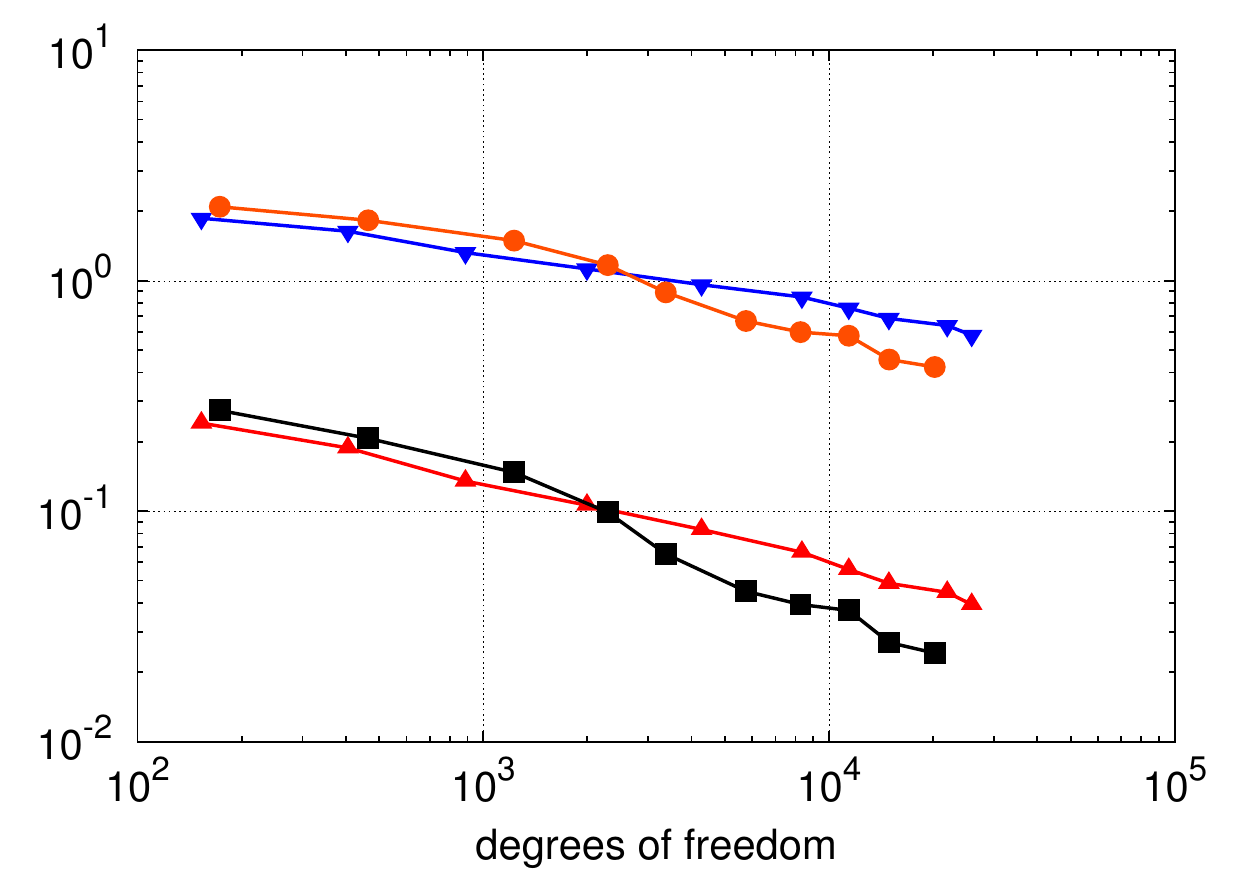}
 \caption{The values of $\err_\image^2$ and $\err_\chi$ are displayed in relation to the number of degrees of freedom  in a log-log plot for the applications (a) (upper left), (b) (upper right), (c) (lower left) and (d) (lower right). The plotted error estimator values correspond to the discretizations (FE) ($\err_\image^2$ (black line, {\color{black} $\blacksquare$}) and $\err_\chi$ (brown line, {\color{gnuplotbrown} $ \bullet$})) and (FE') ($\err_\image^2$ (red line, {\color{gnuplotred} $\filledmedtriangleup$}) and $\err_\chi$ (blue line, {\color{gnuplotblue} $ \filledmedtriangledown$})), respectively.}
 \label{fig:allErrorEstimates}
\end{figure}
\begin{figure}[htb]
 \includegraphics[width=0.19\linewidth]{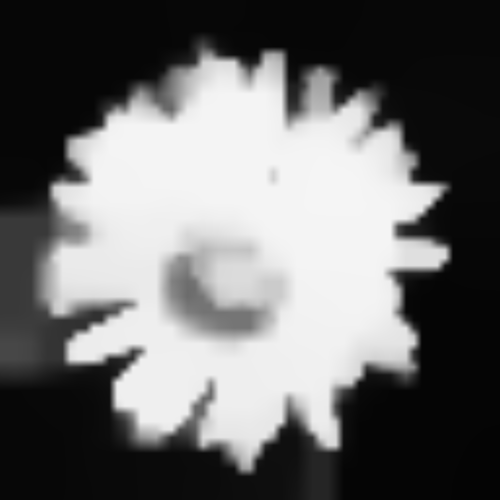}
 \hfill
  \includegraphics[width=0.19\linewidth]{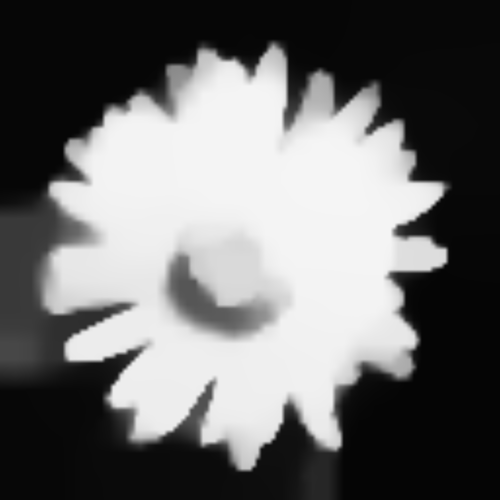}
 \hfill
  \includegraphics[width=0.19\linewidth]{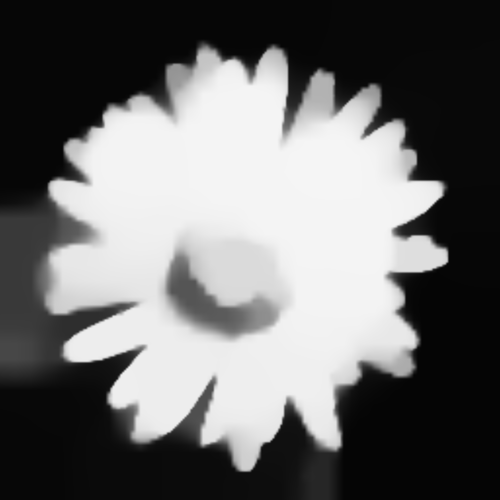}
 \hfill
  \includegraphics[width=0.19\linewidth]{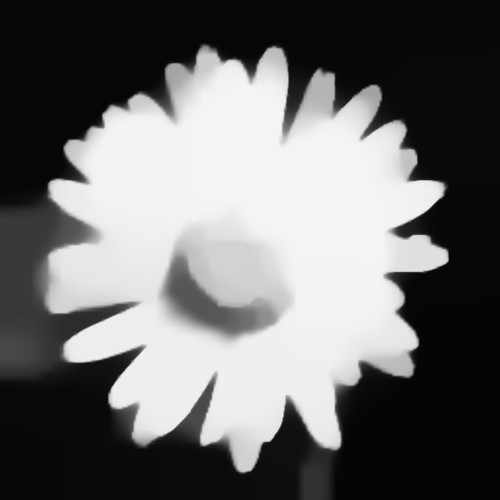}
 \hfill
  \includegraphics[width=0.19\linewidth]{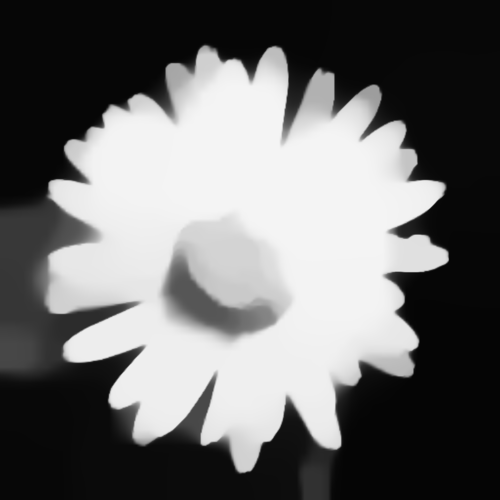}
 \includegraphics[width=0.19\linewidth]{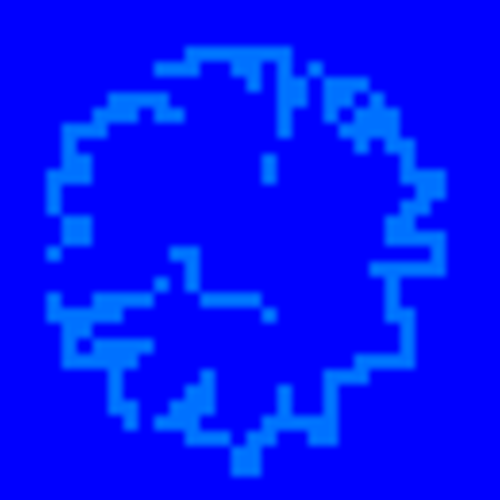}
 \hfill
  \includegraphics[width=0.19\linewidth]{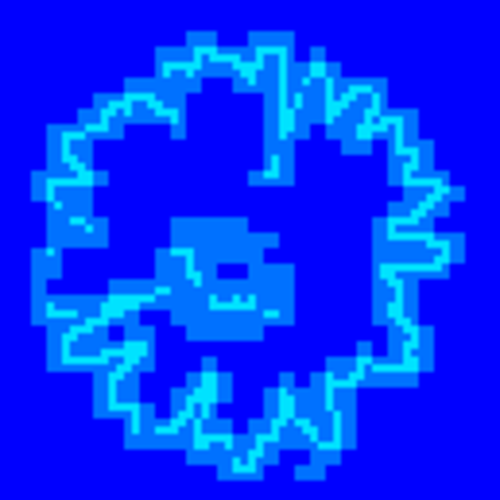}
 \hfill
  \includegraphics[width=0.19\linewidth]{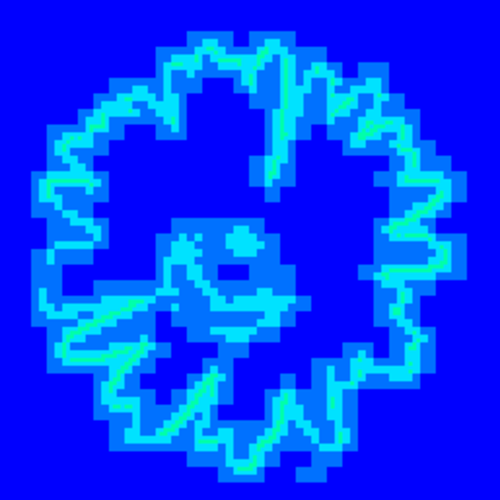}
 \hfill
  \includegraphics[width=0.19\linewidth]{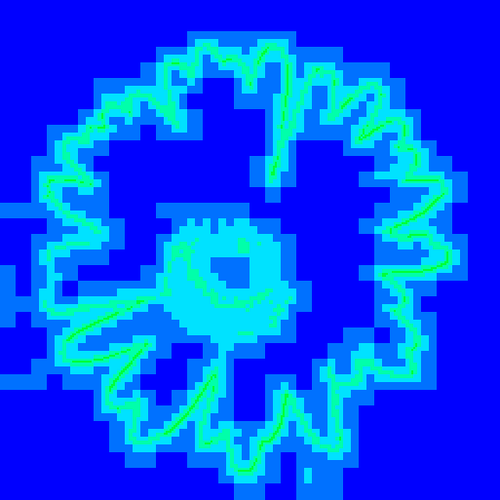}
 \hfill
  \includegraphics[width=0.19\linewidth]{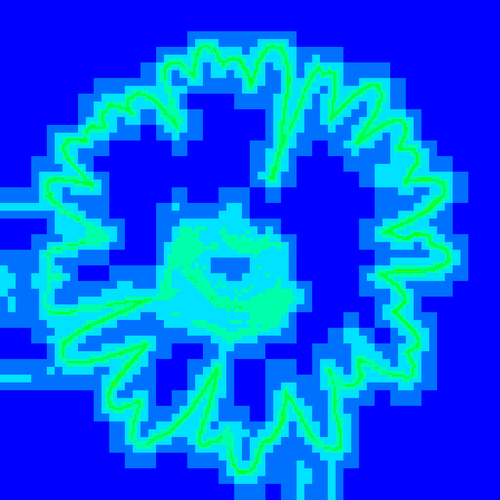}
  \caption{The sequence of solutions $u_h$ and a color coding of the corresponding fineness of the adaptive meshes 
  at the  $1^{st}$,  $2^{nd}$, $3^{rd}$, $4^{th}$ and $5^{th}$ iteration of the adaptive algorithm applied 
  to the input image (c) and computed using the (FE') discretization.}
 \label{fig:meshSequenceFlower}
\end{figure}
Table \ref{table:finalResults} lists (scaled) primal and dual energies, 
$\err_\image^2$, $\eta_{optimal}$ (the $\eta$ value corresponding to  the optimal a posteriori error bound for given $\err^2_u$), and $\err_\chi$ for all input images after the $10$th refinement step of the adaptive algorithm.
The value of $\err_\chi$ peaks for the application (d) due to the relatively low image resolution.

Figure \ref{fig:allErrorEstimates} plots the error estimator $\err_\image^2$
after each refinement step for all input images and both finite element discretizations. In most numerical experiments,
the scheme (FE) performs slightly better than the scheme (FE'). For the flower image, the sequence of adaptive meshes and solutions resulting from the adaptive algorithm for the discretization (FE') is depicted in Figure \ref{fig:meshSequenceFlower}.
Figure \ref{fig:resultsCGPrimalDual} displays solutions for the discretization (FE') and the corresponding adaptive meshes together with color coded deciles of $u_h$, and the graphs of $\eta\mapsto\jumpArea[\Uh,\eta]$ and $\eta\mapsto\err_\chi$.
Note that the displayed deciles explicitly indicate the sets $S_\eta$ for $\eta=0.1,\, 0.2, \, 0.3, \, 0.4$.
Moreover, Figures \ref{fig:resultsPrimalRemaining} and \ref{fig:resultsCameraPrimalRemaining} show the relaxed solution $u_h$ and the thresholded solution $\chi_h$ 
for the remaining discretization schemes and input images.

\setlength{\unitlength}{0.05\textwidth}
\begin{figure}[t]
\resizebox{\linewidth}{!}{
\begin{picture}(20,20)
\put(0,15){\includegraphics[width=0.24\textwidth]{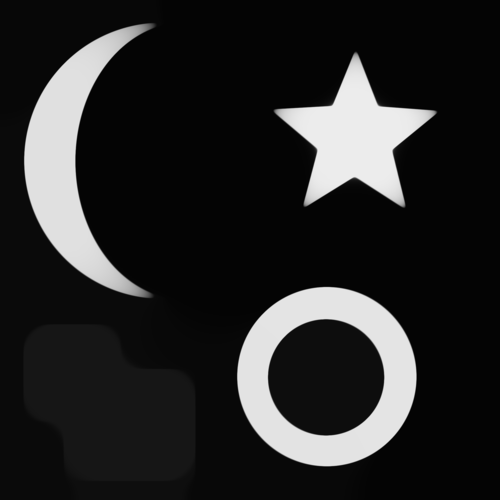}}
\put(5,15){\includegraphics[width=0.24\textwidth]{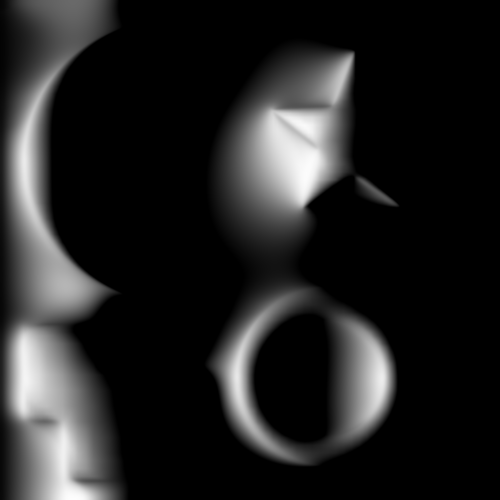}}
\put(10,15){\includegraphics[width=0.24\textwidth]{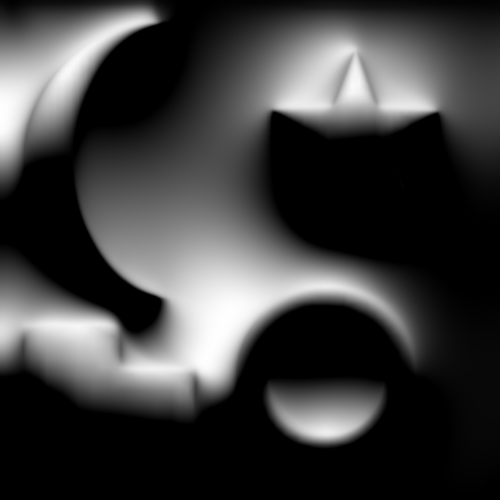}}
\put(15,15){\includegraphics[width=0.24\textwidth]{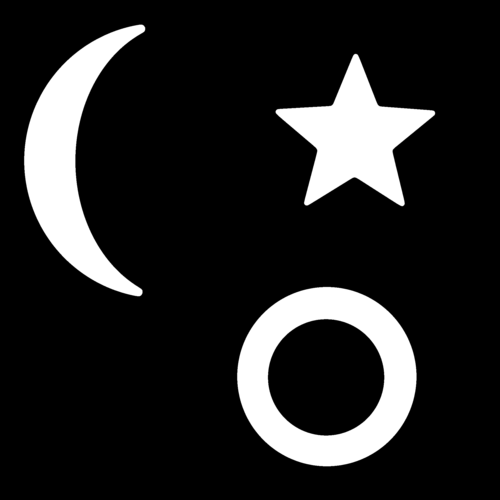}}
\put(0,10){\includegraphics[width=0.24\textwidth]{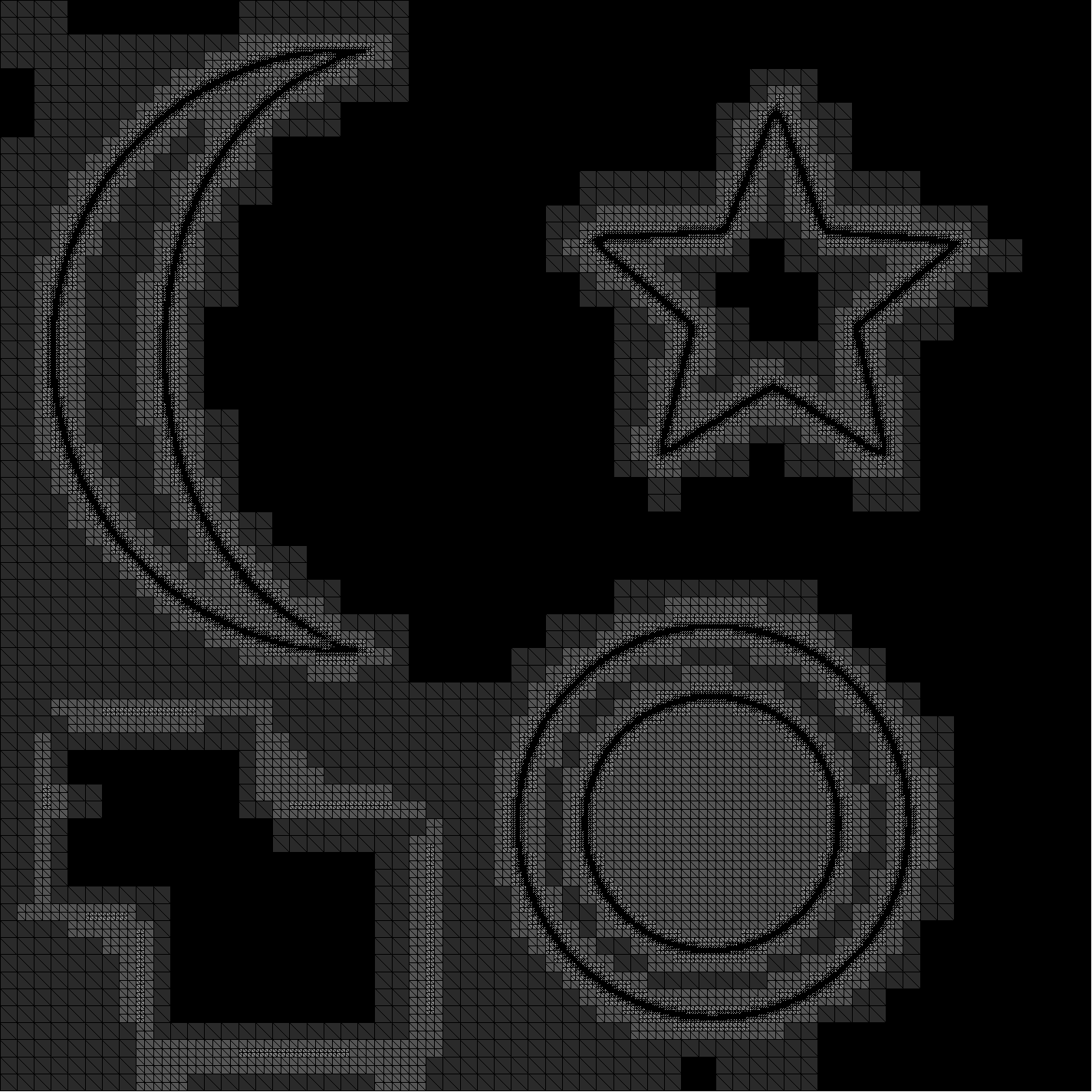}}
\put(5,10){\includegraphics[width=0.24\textwidth]{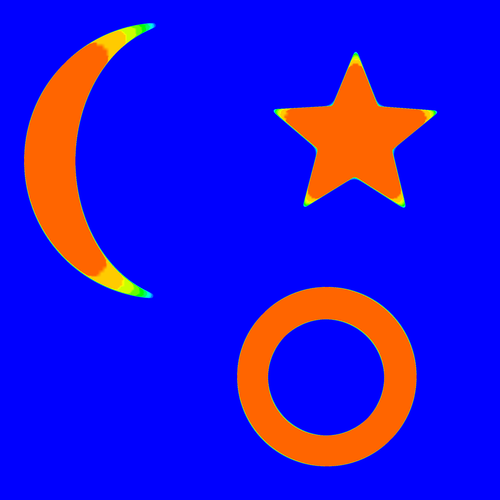}}
\put(10,10){\includegraphics[angle=90, width=0.02\textwidth, height=0.24\textwidth]{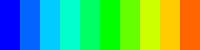}}
\put(10.5,10){$0$}
\put(10.5,14.5){$1$}
\put(10.2,10){\includegraphics[width=0.5\textwidth,height=0.24\textwidth]{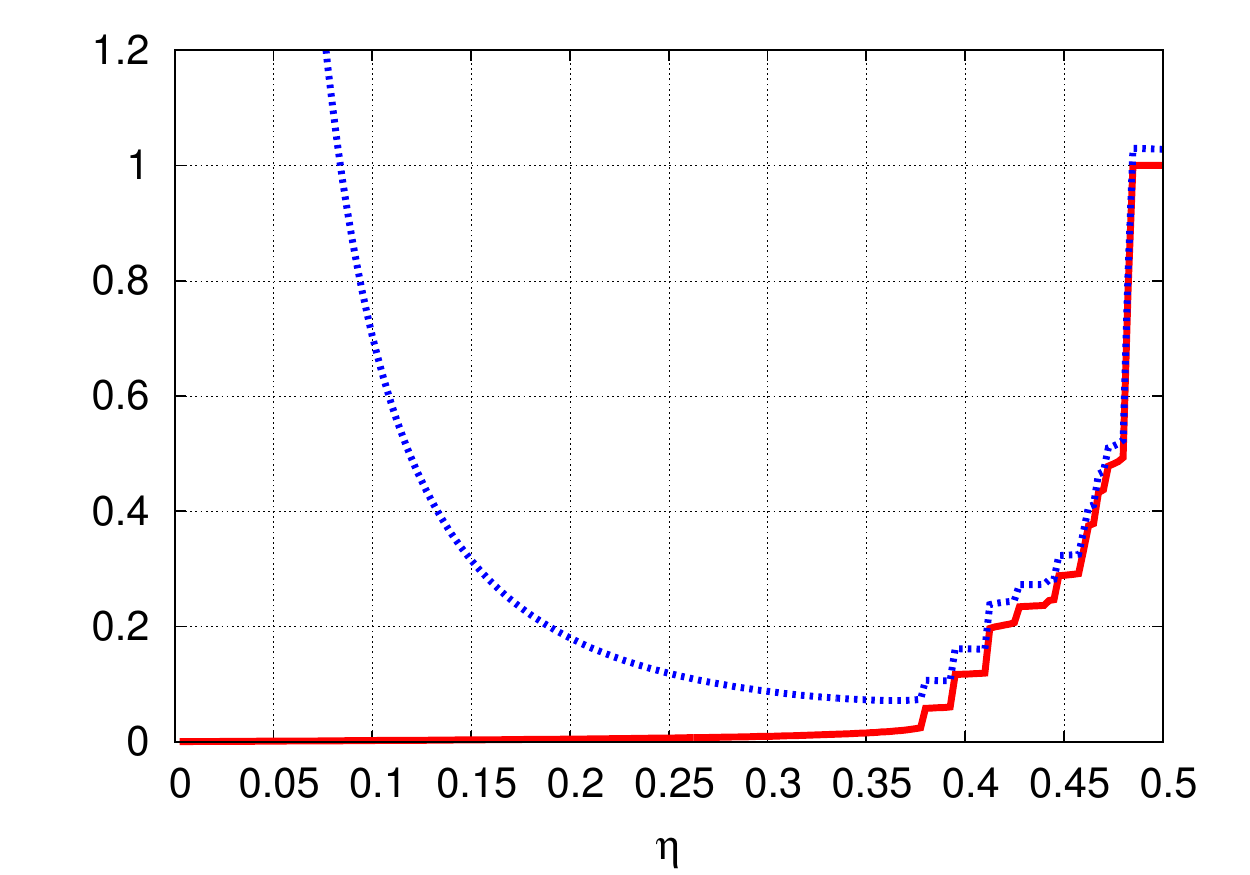}}
\put(0,5){\includegraphics[width=0.24\textwidth]{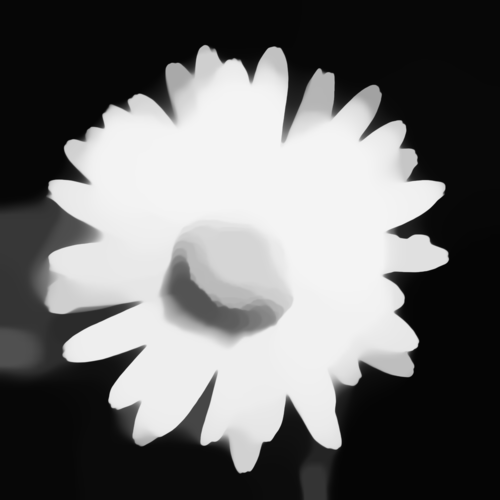}}
\put(5,5){\includegraphics[width=0.24\textwidth]{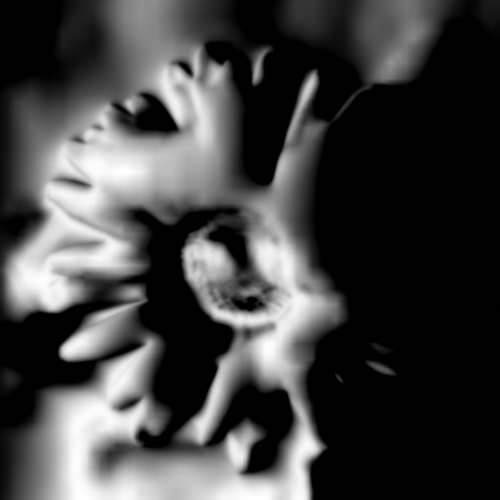}}
\put(10,5){\includegraphics[width=0.24\textwidth]{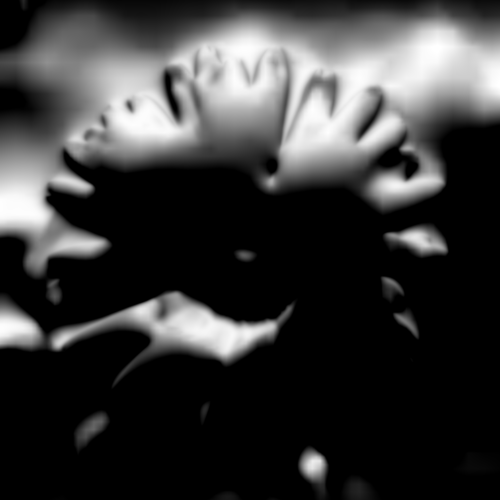}}
\put(15,5){\includegraphics[width=0.24\textwidth]{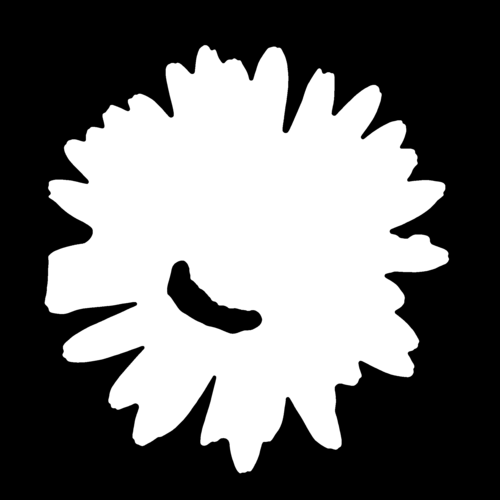}}
\put(0,0){\includegraphics[width=0.24\textwidth]{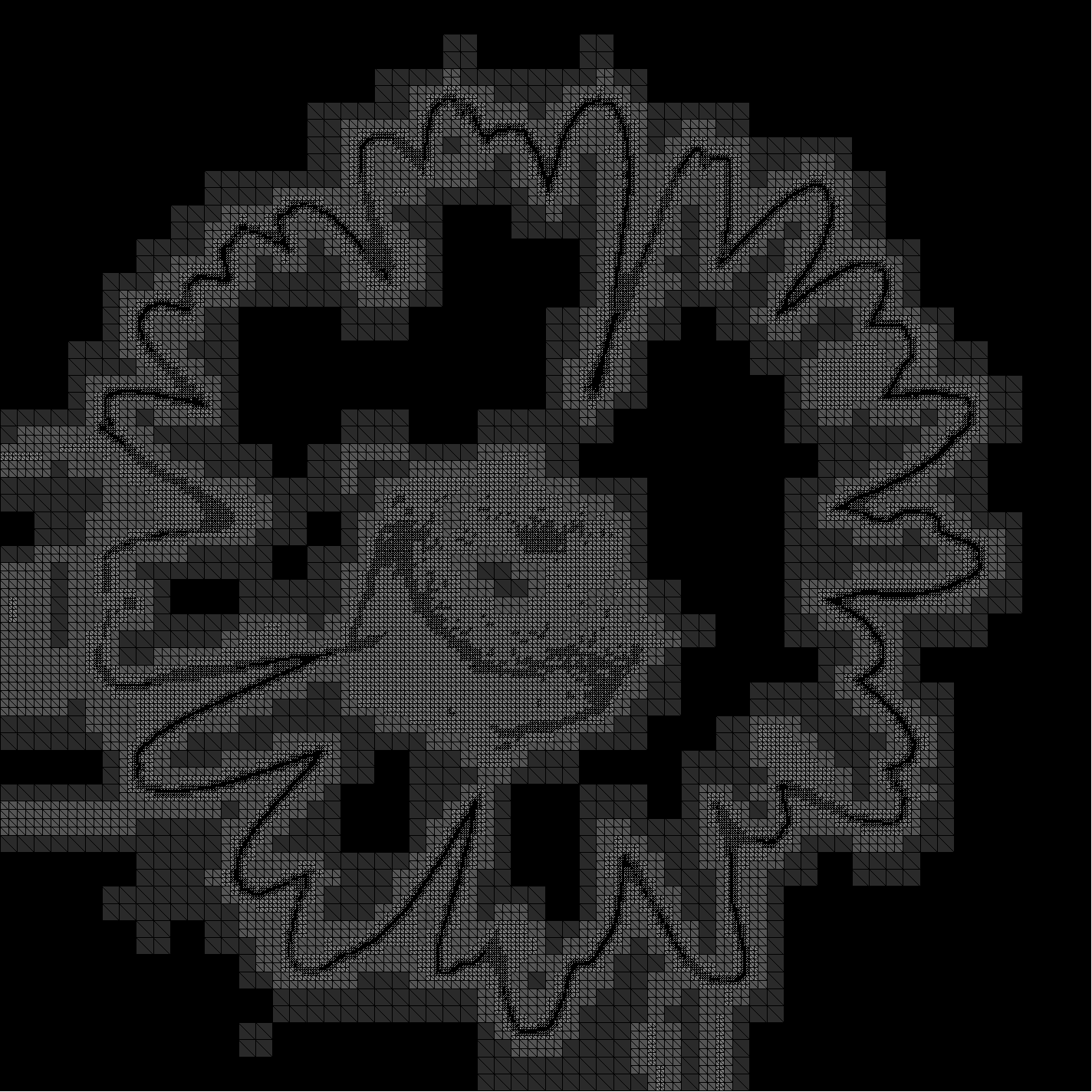}}
\put(5,0){\includegraphics[width=0.24\textwidth]{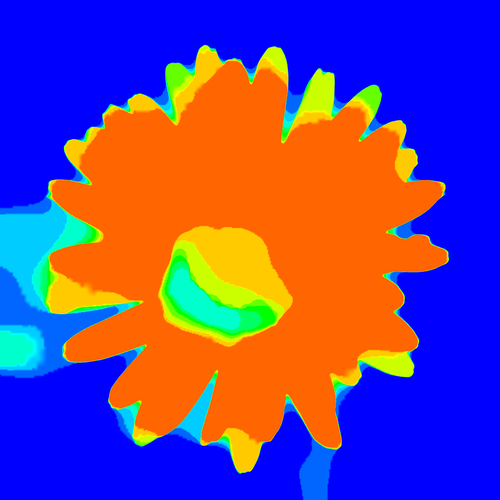}}
\put(10,0){\includegraphics[angle=90, width=0.02\textwidth, height=0.24\textwidth]{images/scale.png}}
\put(10.5,0){$0$}
\put(10.5,4.5){$1$}
\put(10.2,0){\includegraphics[width=0.5\textwidth,height=0.24\textwidth]{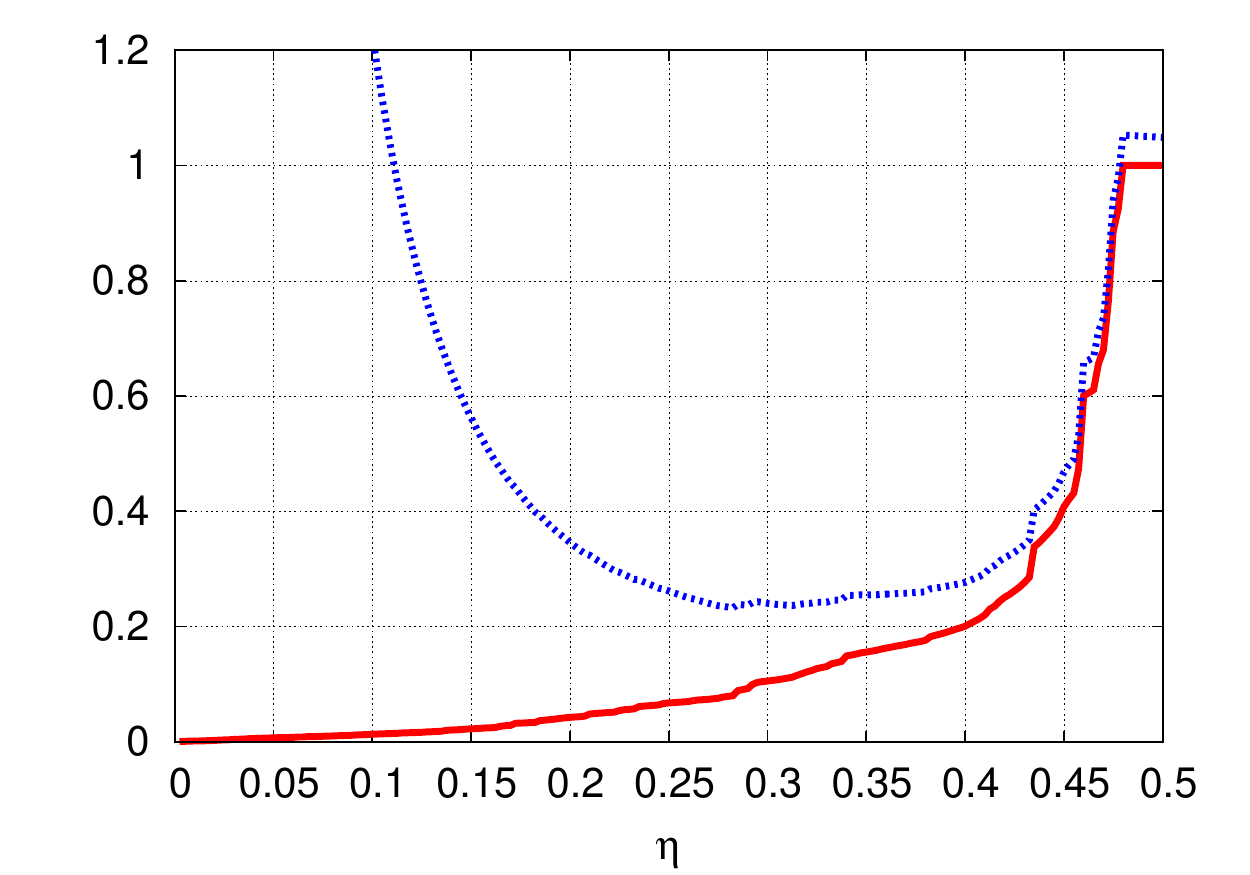}}
\end{picture}
}
\caption{For images (b) (first and second row) and (c) (last two rows) and the discretization (FE') 
the components of the relaxed solution $u_h$, $(p_h)_1$, $(p_h)_2$, the resulting solution $\chi_h$ are shown after the $10^{th}$ iteration of the adaptive scheme. 
In the second and fourth row the adaptive grid (after the $6^{th}$ refinement step), deciles of the discrete solution $u_h$ encoded with different colors, and 
the functions $\eta\mapsto\jumpArea[\Uh,\eta]$ (red solid line)  and $\eta \mapsto \err_\chi$ (blue dashed line) are rendered.}
\label{fig:resultsCGPrimalDual}
\end{figure}
\begin{figure}[htb]
\centering
\resizebox{\linewidth}{!}{
\begin{tikzpicture}
\node[anchor=south west] at (0,2.2) {\includegraphics[width=0.15\linewidth]{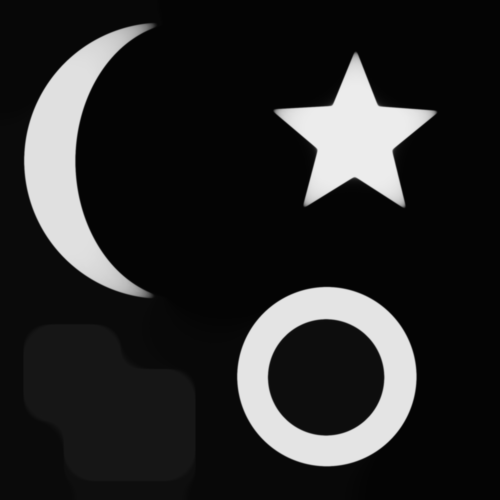}};
\node[anchor=south west] at (2.2,2.2) {\includegraphics[width=0.15\linewidth]{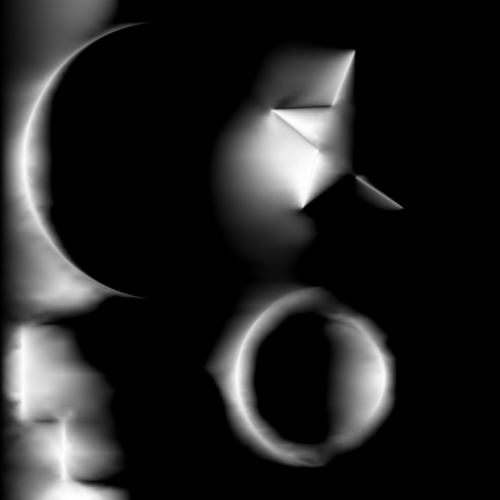}};
\node[anchor=south west] at (4.4,2.2) {\includegraphics[width=0.15\linewidth]{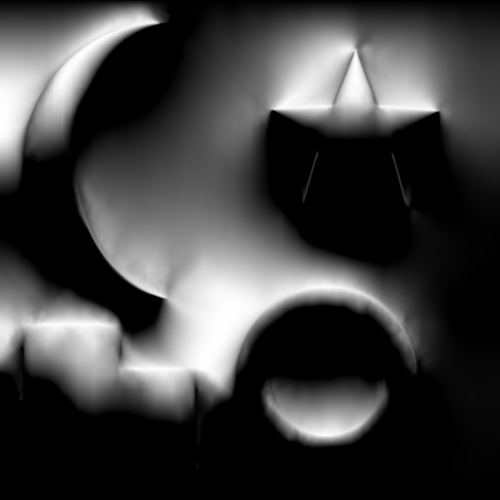}};
\node[anchor=south west] at (6.8,2.2) {\includegraphics[width=0.15\linewidth]{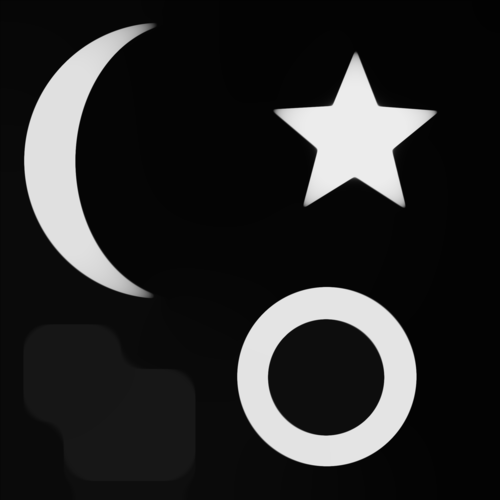}};
\node[anchor=south west] at (9,2.2) {\includegraphics[width=0.15\linewidth]{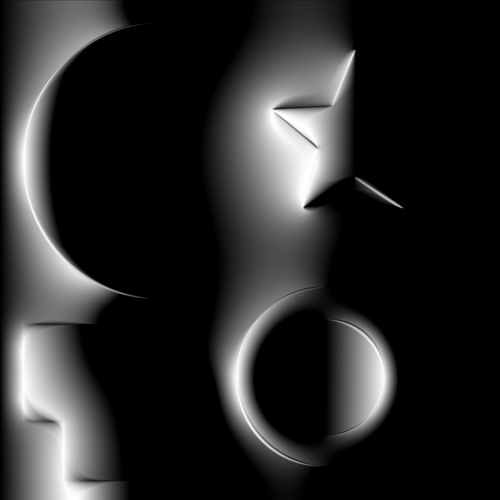}};
\node[anchor=south west] at (11.2,2.2) {\includegraphics[width=0.15\linewidth]{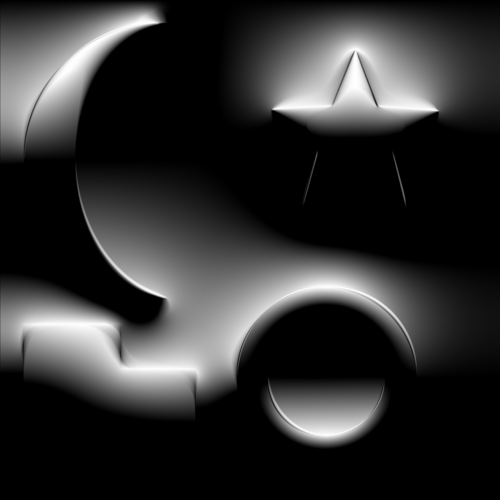}};
\draw [thick] (6.75,0) -- (6.75, 4.5);
\node[anchor=south west] at (0,0) {\includegraphics[width=0.15\linewidth]{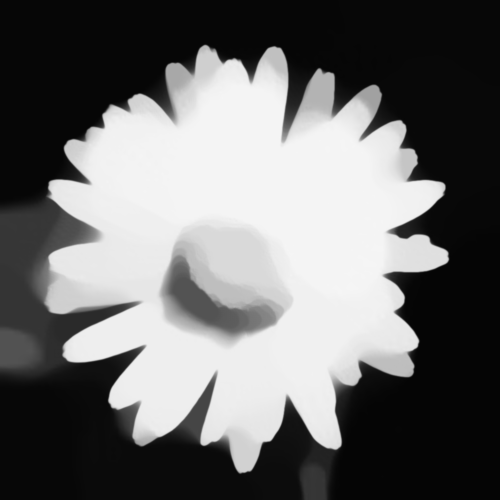}};
\node[anchor=south west] at (2.2,0) {\includegraphics[width=0.15\linewidth]{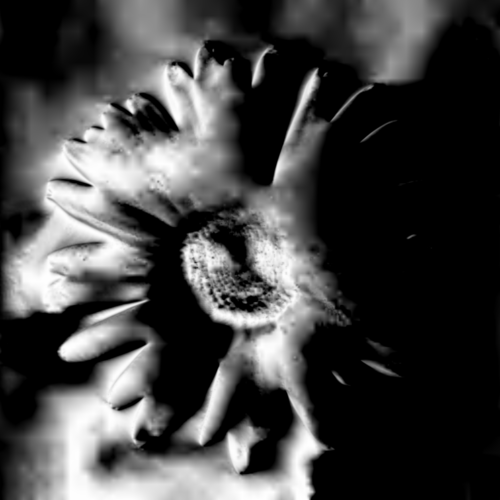}};
\node[anchor=south west] at (4.4,0) {\includegraphics[width=0.15\linewidth]{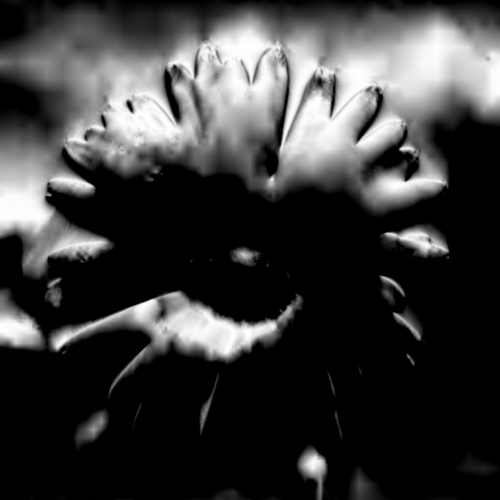}};
\node[anchor=south west] at (6.8,0) {\includegraphics[width=0.15\linewidth]{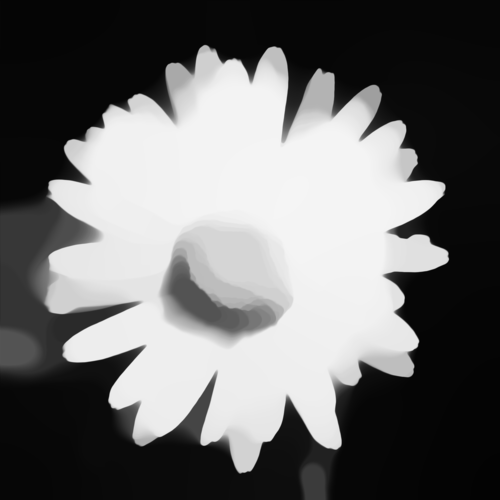}};
\node[anchor=south west] at (9,0) {\includegraphics[width=0.15\linewidth]{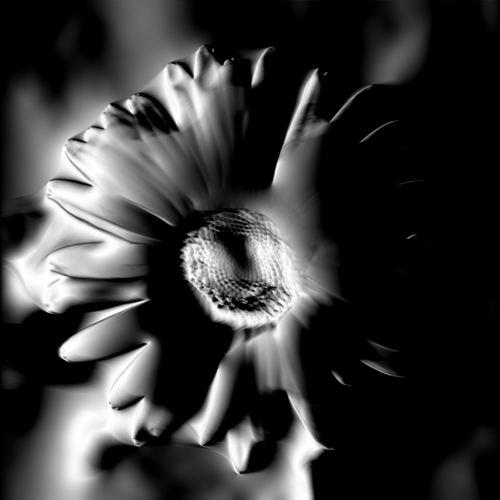}};
\node[anchor=south west] at (11.2,0) {\includegraphics[width=0.15\linewidth]{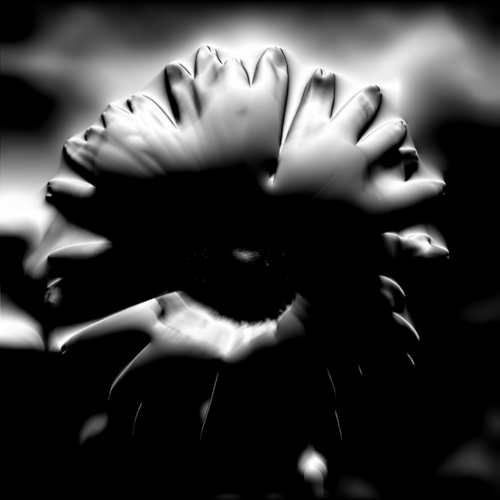}};
\end{tikzpicture}
}
\caption{Relaxed solution $u_h$, $(p_h)_1$, $(p_h)_2$ for the input image (b) (top row) and (c) (bottom row) using the discretization 
(FE) (left, after $10$ iterations of the adaptive algorithm) and (FD) (right).}
\label{fig:resultsPrimalRemaining}
\end{figure}
\begin{figure}[htb]
\includegraphics[width=0.16\linewidth]{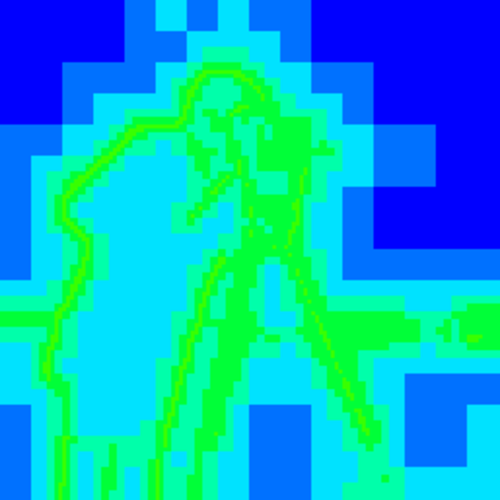}
\hfill
\includegraphics[width=0.16\linewidth]{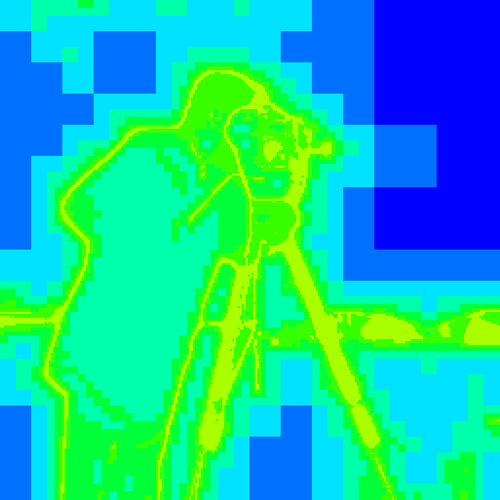}
\hfill
\includegraphics[width=0.16\linewidth]{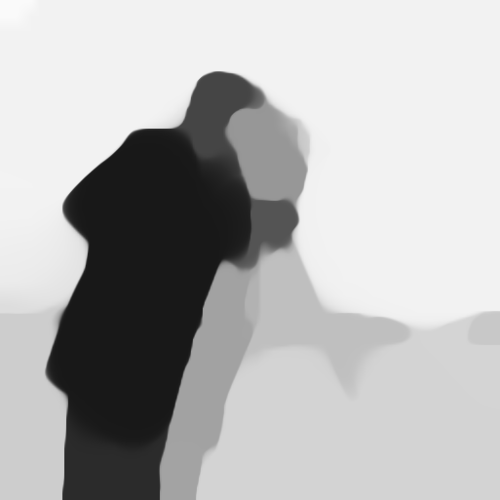}
\hfill
\includegraphics[width=0.16\linewidth]{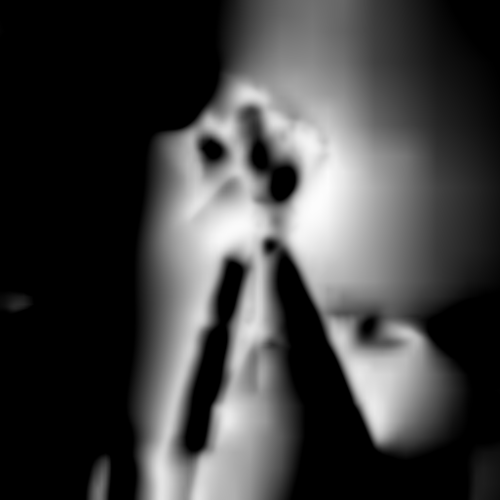}
\hfill
\includegraphics[width=0.16\linewidth]{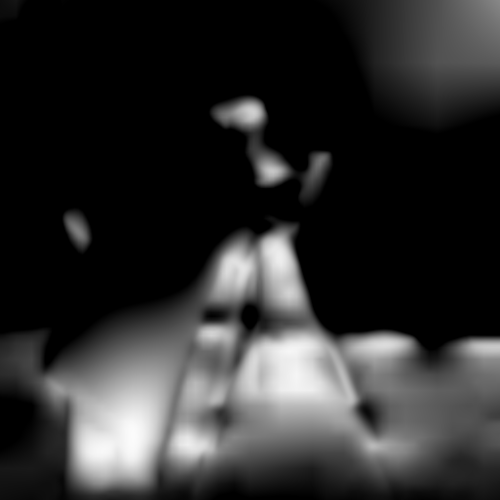}
\hfill
\includegraphics[width=0.16\linewidth]{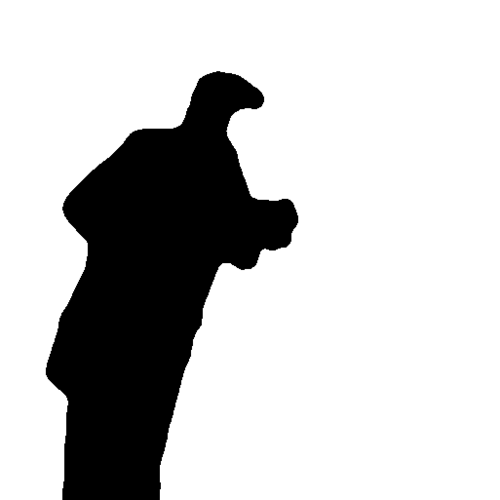}
\caption{The mesh in the $5^{th}$ and $10^{th}$ iteration, the relaxed solution $u_h$, $(p_h)_1$, $(p_h)_2$ and $\chi_h$ for the input image (d) using the discretization (FE')  after $10$ iterations of the algorithm.}
\label{fig:resultsCameraPrimalRemaining}
\end{figure}

Finally, we applied the above methods to an analytic function consisting
of a weight\-ed sum of two Gaussian kernels. To this end, in each step the functionals and the error estimator 
are evaluated on the current adaptive grid and not on a prefixed full resolution grid.
The results are shown in Figure \ref{fig:gaussian} (with parameters $c_1=0.495349$, $c_2=0.056845$  and $\nu=5\cdot 10^{-3}$).

\setlength{\unitlength}{0.05\textwidth}
\begin{figure}[t]
\resizebox{\linewidth}{!}{
\begin{picture}(20,9)
\put(0,5){\includegraphics[width=0.24\textwidth]{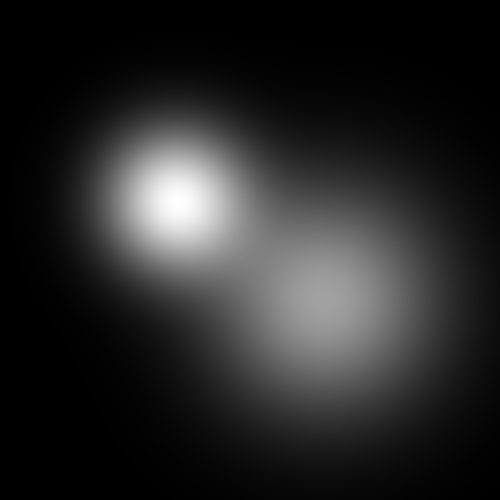}}
\put(5,5){\includegraphics[width=0.24\textwidth]{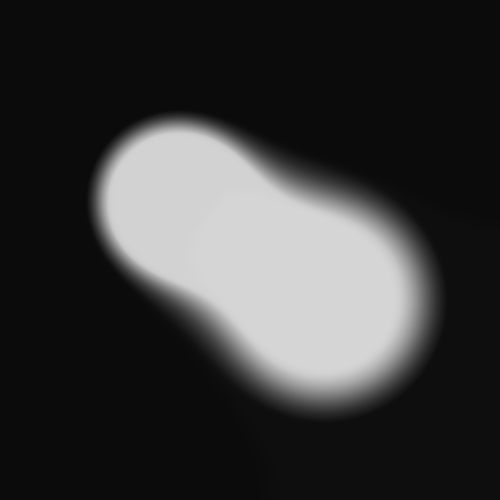}}
\put(10,5){\includegraphics[width=0.24\textwidth]{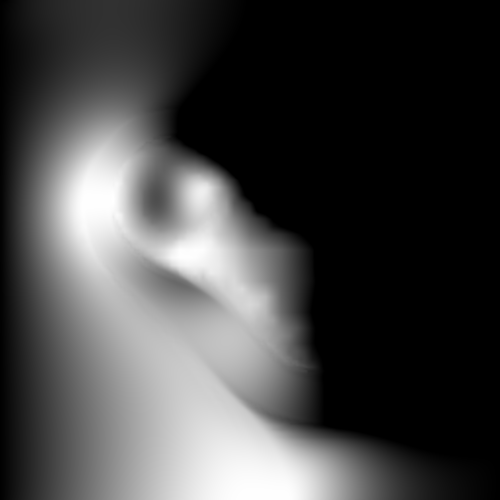}}
\put(15,5){\includegraphics[width=0.24\textwidth]{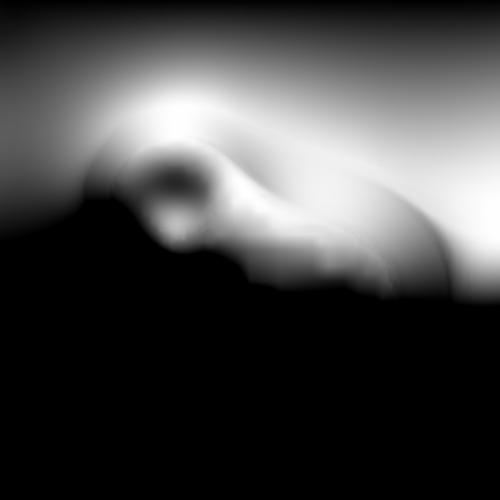}}
\put(0,0){\includegraphics[width=0.24\textwidth]{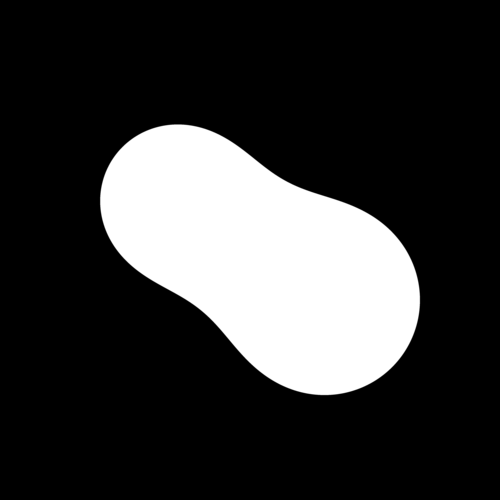}}
\put(5,0){\includegraphics[width=0.24\textwidth]{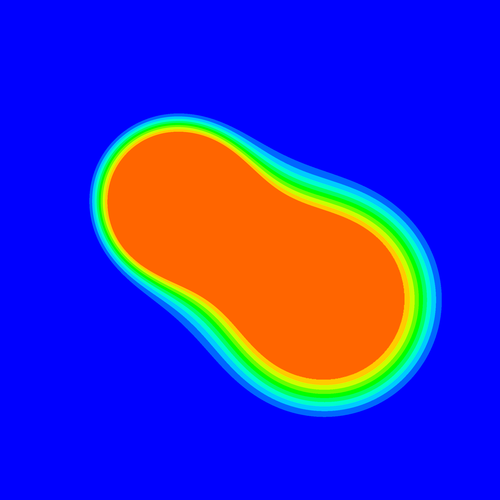}}
\put(10,0){\includegraphics[angle=90, width=0.02\textwidth, height=0.24\textwidth]{images/scale.png}}
\put(10.5,0){$0$}
\put(10.5,4.5){$1$}
\put(10.5,-0.2){\includegraphics[width=0.48\textwidth,height=0.26\textwidth]{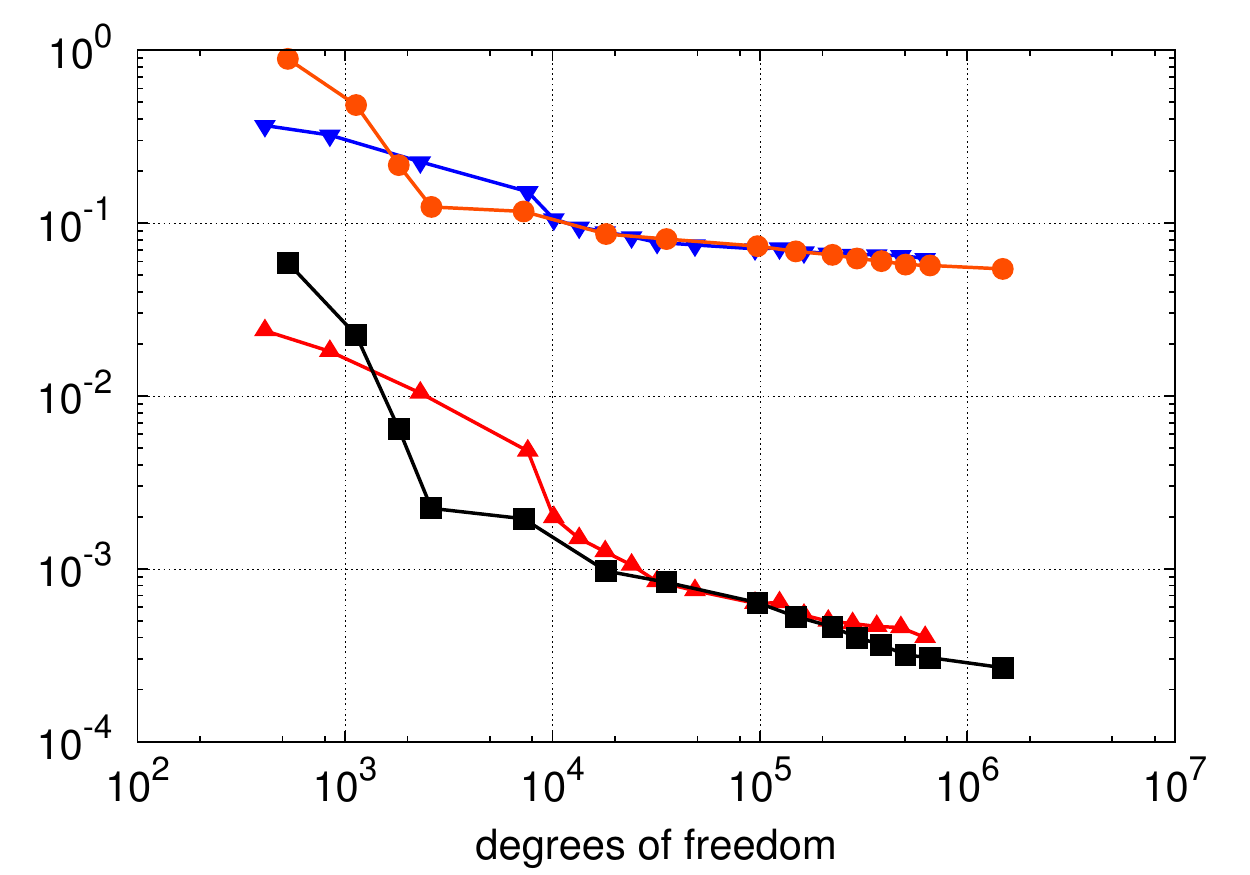}}
\end{picture}
}
\caption{First row: Input image $u_0$ composed by the superposition of two Gaussian kernels, numerical solutions $\image_h$, $(p_h)_1$ and $(p_h)_2$ computed via the adaptive algorithm using discretization (FE'). Second row: $\chi_h$, deciles and the error estimators
in a log-log plot ($\err_\image^2$ in red {\color{red} $\filledmedtriangleup$}, $\err_\chi$ in blue {\color{blue} $ \filledmedtriangledown$}  for the discretization (FE') and $\err_\image^2$ in black {\color{black} $\blacksquare$}, $\err_\chi$ in brown {\color{gnuplotbrown} $ \bullet$} for the discretization (FE)).}
\label{fig:gaussian}
\end{figure}

\section{Conclusions}
We have investigated the a posteriori error estimation for the binary Mumford-Shah model and applied this estimate to two 
different adaptive finite element discretizations in comparison to a nonadaptive finite difference scheme on a regular grid.
The  proposed finite element discretizations in combination with the adaptive meshing strategy lead to a substantial reduction of the required degrees of freedom 
with error values  $\err_\image^2$ and $\err_\chi$ of about the same magnitude as for a standard finite difference scheme.
To improve the resulting estimate of the duality gap $\ERel[v]+\DRel[q]$, the finite element schemes (FE) and (FE') require some oscillation damping smoothing  
in a post-processing step.

The proposed approach to a posteriori estimates for the binary Mumford-Shah model derived in this paper can be applied straightforwardly to more general problems in computer vision.
In fact, the calibration method developed by Alberti, Bouchitt\'e and Dal Maso \cite{AlBo03} provides a convex relaxation of non-convex functionals of 
Mumford-Shah type via the lifting of a variational problem on a $n$-dimensional domain to a minimization problem over characteristic functions of subgraphs in $n+1$ dimensions. 
In the context of non-convex functionals in vision, this approach was studied by Pock \etal  \cite{PoCrBi09,PoCrBi10}. Applications of such functionals include the computation of minimal partitions \cite{ChCrPo08, PoScGr08}, the depth map identification from stereo images or the robust extraction of optimal flow fields \cite{PoCrBi10}.
Here, an adaptive mesh strategy is expected to have an even larger pay-off due to the increased dimension.

\section*{Acknowledgements}
A. Effland and M. Rumpf acknowledge support of the Collaborative Research Centre 1060 and the Hausdorff Center for Mathematics,  both funded by the German Science foundation. 
B. Berkels was funded in part by the Excellence Initiative of the German Federal and State Governments. 

{\small
\bibliographystyle{siam}
\bibliography{Bibtex/all,Bibtex/library,Bibtex/own}

\begin{thebibliography}{10}

\bibitem{AlBo03}
{\sc G.~Alberti, G.~Bouchitt\'e, and G.~Dal~Maso}, {\em The calibration method
  for the {Mumford-Shah} functional and free-discontinuity problems}, Calc.
  Var. Partial Differential Equations, 16 (3) (2003), pp.~299--333.

\bibitem{AlCaCh05}
{\sc F.~Alter, V.~Caselles, and A.~Chambolle}, {\em A characterization of
  convex calibrable sets in $\mathbb{R}^{N}$}, Math. Ann., 332 (2005),
  pp.~329--366.

\bibitem{Am89}
{\sc L.~Ambrosio}, {\em Variational problems in {SBV} and image segmentation},
  Acta Appl. Math., 17 (1989), pp.~1--40.

\bibitem{AmFuPa00}
{\sc L.~Ambrosio, N.~Fusco, and D.~Pallara}, {\em Functions of bounded
  variation and free discontinuity problems}, Oxford Mathematical Monographs,
  Oxford University Press, New York, 2000.

\bibitem{AmTo92}
{\sc L.~Ambrosio and V.~M. Tortorelli}, {\em On the approximation of free
  discontinuity problems}, Bollettino dell{}'Unione Matematica Italiana,
  Sezione B, 6 (1992), pp.~105--123.

\bibitem{Ba13}
{\sc S.~Bartels}, {\em Broken sobolev space iteration for total variation
  regularized minimization problems}.
\newblock Preprint.

\bibitem{Ba12}
\leavevmode\vrule height 2pt depth -1.6pt width 23pt, {\em Error control and
  adaptivity for a variational model problem defined on functions of bounded
  variation}, Math. Comp. (online first),  (2014).

\bibitem{Be10}
{\sc B.~Berkels}, {\em Joint methods in imaging based on diffuse image
  representations}, dissertation, University of Bonn, 2010.

\bibitem{BoCh00}
{\sc B.~Bourdin and A.~Chambolle}, {\em Implementation of an adaptive
  {F}inite-{E}lement approximation of the {M}umford-{S}hah functional}, Numer.
  Math., 85 (2000), pp.~609--646.

\bibitem{Ch95}
{\sc A.~Chambolle}, {\em Image segmentation by variatonal methods:
  {M}umford-{S}hah functional and the discrete approximations}, SIAM J. Appl.
  Math., 55 (1995), pp.~827--863.

\bibitem{Ch04}
\leavevmode\vrule height 2pt depth -1.6pt width 23pt, {\em An algorithm for
  total variation minimization and applications}, Journal of Mathematical
  Imaging and Vision, 20 (2004), pp.~89--97.

\bibitem{Ch05}
\leavevmode\vrule height 2pt depth -1.6pt width 23pt, {\em Total variation
  minimization and a class of binary {MRF} models}, in Energy Minimization
  Methods in Computer Vision and Pattern Recognition, A.~Rangarajan et~al.,
  eds., vol.~3757 of LNCS, Springer, 2005, pp.~136--152.

\bibitem{ChCrPo08}
{\sc A.~Chambolle, D.~Cremers, and T.~Pock}, {\em A convex approach for
  computing minimal partitions}, Tech. Rep. 649, Ecole Polytechnique, Centre de
  Math{\'{e}}matiques appliqu{\'{e}}es, UMR CNRS 7641, November 2008.

\bibitem{ChCrPo12}
\leavevmode\vrule height 2pt depth -1.6pt width 23pt, {\em A convex approach to
  minimal partitions}, SIAM Journal on Imaging Sciences, 5 (2012),
  pp.~1113--1158.

\bibitem{ChDa99}
{\sc A.~Chambolle and G.~Dal~Maso}, {\em Discrete approximation of the
  {Mumford-Shah} functional in dimension two}, Mathematical Modelling and
  Numerical Analysis, 33 (4) (1999), pp.~651--672.

\bibitem{ChDa09}
{\sc A.~Chambolle and J.~Darbon}, {\em On total variation minimization and
  surface evolution using parametric maximum flows}, International Journal of
  Computer Vision, 84 (2009), pp.~288--307.

\bibitem{ChPo11}
{\sc A.~Chambolle and T.~Pock}, {\em A first-order primal-dual algorithm for
  convex problems with applications to imaging}, Journal of Mathematical
  Imaging and Vision, 40 (2011), pp.~120--145.

\bibitem{ChVe01a}
{\sc T.~F. Chan and L.~A. Vese}, {\em A level set algorithm for minimizing the
  {M}umford-{S}hah functional in image processing}, in IEEE/Computer Society
  Proceedings of the 1st IEEE Workshop on Variational and Level Set Methods in
  Computer Vision, 2001, pp.~161--168.

\bibitem{ChDaHa09}
{\sc Y.~Chen, T.~A. Davis, W.~W. Hager, and S.~Rajamanickam}, {\em Algorithm
  887: {CHOLMOD}, supernodal sparse {C}holesky factorization and
  update/downdate}, ACM Transactions on Mathematical Software, 35 (2009),
  pp.~22:1--22:14.

\bibitem{DoVo97}
{\sc D.~C. Dobson and C.~R. Vogel}, {\em Convergence of an iterative method for
  total variation denoising}, SIAM J. Numer. Anal., 34 (1997), pp.~1779--1791.

\bibitem{EkTe99}
{\sc I.~Ekeland and R.~T\'{e}mam}, {\em Convex analysis and variational
  problems}, Society for Industrial and Applied Mathematics, Philadelphia, PA,
  USA, 1999.

\bibitem{EvGa92}
{\sc L.~Evans and R.~Gariepy}, {\em Measure Theory and Fine Properties of
  Functions}, CRC Press, 1992.

\bibitem{FePr02}
{\sc X.~Feng and A.~Prohl}, {\em Analysis of total variation flow and its
  finite element approximations}, M2AN, 37 (2002), pp.~533--556.

\bibitem{Han2005}
{\sc W.~Han}, {\em A Posteriori Error Analysis Via Duality Theory}, vol.~8 of
  Advances in Mechanics and Mathematics,, Springer, 2005.

\bibitem{HiKu05}
{\sc M.~Hinterm{\"{u}}ller and K.~Kunisch}, {\em Path-following methods for a
  class of constrained minimization problems in function space}, tech. rep.,
  Department of Computational an Applied Mathematics, Rice University, Houston,
  Texas, July 2004.

\bibitem{HiKu04}
\leavevmode\vrule height 2pt depth -1.6pt width 23pt, {\em Total bounded
  variation regularization as a bilaterally constrained optimization problem},
  SIAM J. Appl. Math., 64 (2004), pp.~1311--1333.

\bibitem{Ll82}
{\sc S.~P. Lloyd}, {\em Least squares quantization in {PCM}}, IEEE Transactions
  on Information Theory, 28 (1982), pp.~129--137.

\bibitem{MoMo77}
{\sc L.~Modica and S.~Mortola}, {\em Un esempio di {$\Gamma
  \sp{-}$}-convergenza}, Boll. Un. Mat. Ital. B (5), 14 (1977), pp.~285--299.

\bibitem{Morel1995}
{\sc J.-M. Morel and S.~Solimini}, {\em Variational methods in image
  segmentation}, Progress in Nonlinear Differential Equations and their
  Applications, 14, Birkh{\"{a}}user Boston Inc., Boston, MA, 1995.

\bibitem{MuSh89}
{\sc D.~Mumford and J.~Shah}, {\em Optimal approximation by piecewise smooth
  functions and associated variational problems}, Communications on Pure and
  Applied Mathematics, 42 (1989), pp.~577--685.

\bibitem{NiEsCh06}
{\sc M.~Nikolova, S.~Esedo{\=g}lu, and T.~F. Chan}, {\em Algorithms for finding
  global minimizers of image segmentation and denoising models}, SIAM Journal
  on Applied Mathematics, 66 (2006), pp.~1632--1648.

\bibitem{RiWa10}
{\sc S.~{\"O}z{\i}\c{s}{\i}k, B.~Riviere, and T.~Warburton}, {\em On the
  constants in inverse inequalities in {$L^2$}}, Tech. Rep. 10-19, Department
  of Computational \& Applied Mathematics, Rice University, 2010.

\bibitem{PoCrBi09}
{\sc T.~Pock, D.~Cremers, H.~Bischof, and A.~Chambolle}, {\em An algorithm for
  minimizing the {Mumford-Shah} functional}, in IEEE 12th International
  Conference on Computer Vision, 2009.

\bibitem{PoCrBi10}
\leavevmode\vrule height 2pt depth -1.6pt width 23pt, {\em Global solutions of
  variational models with convex regularization}, SIAM Journal on Imaging
  Sciences, 3 (2010), pp.~1122--1145.

\bibitem{PoScGr08}
{\sc T.~Pock, T.~Schoenemann, G.~Graber, H.~Bischof, and D.~Cremers}, {\em A
  convex formulation of continuous multi-label problems}, in ECCV08, 2008,
  pp.~III: 792--805.

\bibitem{Re00}
{\sc S.~Repin}, {\em A posteriori error estimation for variational problems
  with uniformly convex functionals}, Mathematics of Computation, 69 (2000),
  pp.~481--500.

\bibitem{Re08}
\leavevmode\vrule height 2pt depth -1.6pt width 23pt, {\em A Posteriori
  Estimates for Partial Differential Equations}, Radon Series on Computational
  and Applied Mathematics, Wal, 2008.

\bibitem{Ro97a}
{\sc R.~T. Rockafellar}, {\em Convex Analysis}, Princeton University Press,
  1997.

\bibitem{RuOsFa92}
{\sc L.~Rudin, S.~Osher, and E.~Fatemi}, {\em Nonlinear total variation based
  noise removal algorithms}, Physica D, 60 (1992), pp.~259--268.

\bibitem{Sh05}
{\sc J.~Shen}, {\em ${\Gamma}$-convergence approximation to piecewise constant
  {M}umford-{S}hah segmentation}, in Proceedings of the 7th International
  Conference on Advanced Concepts for Intelligent Vision Systems (ACIVS 2005),
  vol.~3708 of Lecture Notes in Computer Science, Springer, 2005, pp.~499--506.

\bibitem{WaLu11}
{\sc J.~Wang and B.~Lucier}, {\em Error bounds for finite-difference methods
  for {Rudin-Osher-Fatemi} image smoothing}, SIAM Journal on Numerical
  Analysis, 49 (2011), pp.~845--868.

\end{thebibliography}
}

\end{document}